\newcommand\utimes{\mathbin{\ooalign{$\cup$\cr%
   \hfil\raise0.42ex\hbox{$\scriptscriptstyle\times$}\hfil\cr}}}
\newcommand\bigutimes{\mathop{\ooalign{$\bigcup$\cr%
   \hfil\raise0.36ex\hbox{$\scriptscriptstyle\boldsymbol{\times}$}\hfil\cr}}}
\theoremstyle{definition}
\newtheorem{thm}{Theorem}[section] 
\newtheorem{cor}[thm]{Corollary}
\newtheorem{lem}[thm]{Lemma}
\newtheorem{prop}[thm]{Proposition}
\theoremstyle{definition}
\newtheorem{defn}[thm]{Definition}
\newtheorem{rem}[thm]{Remark}
\newtheorem{nota}[thm]{Notation}
\numberwithin{equation}{section} 
\author[PL. Tseng]{Pei-Lun Tseng}
\address{New York University Abu Dhabi, Division of Science, Mathematics, Abu Dhabi, UAE}
\email{pt2270@nyu.edu}
\date{\today}
\keywords{ Operator-valued infinitesimal independence, Infinitesimal multiplicative convolutions, Infinitesimal t-coefficients }
\subjclass[2020]{46L54, 15B52, 46L53}
\title{Operator-Valued Infinitesimal Multiplicative Convolutions}
\begin{document}

\maketitle

\begin{abstract}

We consider the notions of operator-valued infinitesimal (OVI) free independence, OVI Boolean independence, and OVI monotone independence. For each notion of OVI independence, we introduce the corresponding infinitesimal transforms, and then we show that the transforms satisfy certain multiplicative property. Additionally, we extend the concept of $t$-coefficients to the infinitesimal framework and investigate its properties. Finally, we present an application involving complex Wishart matrices utilizing our infinitesimal free multiplicative formula.

\end{abstract}

\section{Introduction}

In non-commutative probability theory, various concepts of independence have been explored. In \cite{Mur03}, Muraki demonstrated that there are only five types of independence, which are tensor, free \cite{voi85}, Boolean \cite{spe93}, monotone \cite{Mur2000}, and anti-monotone \cite{Mur2000} that exhibit specific universal properties.

The notion of free independence was introduced by Voiculescu in 1985 \cite{voi85}. It plays an important role in the studying of the asymptotic behavior of random matrices. To be precise, it provides us a way to study the distribution of the eigenvalues of polynomials in several large random matrices. Over time, numerous extensions and generalizations of free probability have emerged. Infinitesimal freeness is one of the generalization of freeness \cite{BGN},\cite{BS12}. 

An application of infinitesimal freeness to random matrix theory was first presented by Shlyakhtenko \cite{SH18}, and he showed how to understand the finite rank perturbation in some random matrix models by applying infinitesimal freenss. Subsequently, many works on infinitesimal free probability and its application to random matrices are studied \cite{coll18}, \cite{dall19}, \cite{mingo19}, and \cite{au21} . The infinitesimal analogous for Boolean independence and monotone independence were also investigated in \cite{CEKP19} and \cite{Has11}.   

Another extension of freeness is known as operator-valued (OV) freeness \cite{VOI95}, which parallels to the classical concept of conditional independence. This theory was applied by Belinschi, Mai, and Speicher to describe the limit behavior of polynomial functions of some random matrix models via a linearization trick \cite{BMN17}. In addition, OV analogues of Boolean independence was studied by Popa \cite{POP09}. Additionally, monotone independence within the OV framework has been studied in \cite{POP08} and \cite{Has14}.

Free convolution plays a crucial rule in free probability theory. Roughly speaking, if $x$ and $y$ are two random variables that are freely independent, the law of $x+y$ (respectively $xy$) is called the free additive (respectively free multiplicative) convolution of the laws of $x$ and $y$ \cite{voi85} \cite{voi87}. Analogous convolutions also exist for Boolean independent random variables \cite{spe93} \cite{FR04} and for monotone independent random variables \cite{Mur2000} \cite{Ber05}. The infinitesimal analogue of free convolution was introduced by Belinschi and Shlyakhtenko \cite{BS12}. Moreover, convolution for free random variables in the OV framework was presented by Voiculescu \cite{VOI95}, and the OV the analogue of convolution for Boolean and monotone random variables have also been studied in \cite{POP08} \cite{POP09}.    

Curran and Speicher introduced the notion of infinitesimal freeness in the OV framework, presenting a random matrix model which are asymptotically OVI freeness \cite{CS11}. A detailed discussion of OVI freeness is covered in \cite{PL19}. In particular, the formula of OVI free additive convolution was also provided. Furthermore, the notions of OVI Boolean and monotone independence were investigated in \cite{DPL21}, and the OVI Boolean and monotone additive convolutions were provided. 

The main purpose of this paper is to investigate the OVI analogues of multiplicative convolutions for free, Boolean, and monotone independent random variables. We apply the method of \cite{PL19} and \cite{DPL21} to reduce the OVI famework to an $2\times 2$ upper triangular probability space,
 for which we have rich tools in the OV realm to deal with them. For the free case, 
 we introduce the notion of the infinitesimal $T$-transform $\partial T$ (see Definition \ref{defn:inf-T=transform}), and we establish that if self-adjoint elements $x$ and $y$ are infinitesimally free from an operator-valued infinitesimal probability space $(\mathcal{A},\mathcal{B},E,E')$ where $E(x)$ and $E(y)$ are invertible, then for $\|w\|$ small enough, the following relation holds:  
\begin{eqnarray*}
 \partial T_{xy}(w) 
&=& T_x\Big(T_y(w)wT_y(w)^{-1}\Big)\partial T_y(w)+ T_x'(T_y(w)wT_y(w)^{-1})(C)T_y(w)\\ 
&&+\partial T_x\Big(T_y(w)wT_y(w)^{-1}\Big)T_y(w)
\end{eqnarray*}
with
$$
C=-T_y(w)wT_y(w)^{-1}\partial T_y(w)T_y(w)^{-1}+\partial T_y(w)wT_y(w)^{-1}.
$$
where we recall $T_x$ and $T_y$ represent the $T$-transform of $x$ and $y$ respectively, (see the definition in Subsection \ref{Subsection:2.1}). (Note that the OVI $S$-transforms and their related product formula have been investigated in \cite{PL19}.)

Additionally, the $\eta$-transform and $\kappa$-transform play a crucial role in characterizing the corresponding multiplicative convolution for Boolean and monotone independent random variables respectively (see details in Subsection \ref{Subsection:2.1}). We introduce the corresponding infinitesimal transforms $\partial \eta$ and $\partial \kappa$ (see Definition \ref{defn:inf_eta_transform} and Definition \ref{defn:inf_kappa_transform}); furthermore, we provide the Boolean and monotone infinitesimal multiplicative convolution. More precisely, if selfadjoint elements $x$ and $y$ are infinitesimally Boolean independent in $\mathcal{A}$, then the following relation holds for $\|b\|$ small enough: 
$$\partial \eta_{(1+x)(1+y)}(b)=\eta_{1+x}(b)\partial \eta_{1+y}(b)+\partial \eta_{1+x}(b)\eta_{1+y}(b).$$

On the other hand, if we assume $x=x^*$ and $y=y^*$ are elements in $\mathcal{A}$ such that $x-1$ and $y$ are infinitesimally monotone independent in $\mathcal{A},$ then 
$$
\partial \kappa_{yx}(b)=\kappa_x'(\kappa_y(b))(\partial \kappa_y(b))+\partial \kappa_x(\kappa_y(b)) \qquad \text{ for }\|b\| \text{ small enough.}
$$
(Refer to the details in Theorem \ref{OVITfcon}, Theorem \ref{OVIbcon}, and Theorem \ref{OVIMprod}).

In \cite{Dy07}, Dykema introduced the notion of non-crossing linked partitions, and demonstrated how to express moments in terms of $t$-coefficients $\{t_n\}_{n\geq 0}$, which are the coefficients of the (OV) $T$-transform. Subsequently, many properties of $t$-coefficients and non-crossing linked partitions were further studied in \cite{POP-linked-08,N10,EKG21}.  
In Section 5, we introduced the notion of infinitesimal $t$-coefficients $\{t_n'\}_{n\geq 0}$ and extended the properties discussed in \cite{POP-linked-08}. More precisely, we illustrated how to express infinitesimal free cumulants in terms of $t$-coefficients and infinitesimal $t$-coefficients. Furthermore, we provided an equivalent statement of infinitesimal freeness through $\{t_n,t_n'\}_{n\geq 0}$ (see Subsection \ref{Subsect: Multi-t-coeff}). Additionally, in Subsection \ref{Subsect: single-T-coeff}, we presented an application involving complex Wishart matrices. 

In addition to Section 5, we review fundamental knowledge of OV and OVI probability theory in Section 2. Moving to Section 3, for each notion of OVI independence, we introduce the corresponding infinitesimal transforms and demonstrate that these transforms satisfy certain multiplicative properties. In Section 4, we provide a differentiable paths approach to study infinitesimal multiplicative convolutions, as outlined in Theorem \ref{DPconv}.

\tableofcontents


\section{Preliminaries} 

\subsection{Operator-Valued Probability Theory }\label{Subsection:2.1}

$(\mathcal{A},\mathcal{B},E)$ is called an \emph{operator-valued probability space} ( or \emph{OV probability space} for short) if $\mathcal{A}$ is a $C^*$-unital algebra, $\mathcal{B}$ is a $C^*$-unital subalgebra, and $E:\mathcal{A}\to \mathcal{B}$ is a linear $\mathcal{B}$-$\mathcal{B}$ bimodule completely positive map. Given an OV probability space $(\mathcal{A},\mathcal{B},E)$ and a given element $x\in\mathcal{A}$, the \emph{operator-valued distribution of $x$} is the linear map $\nu:\mathcal{B}\langle X\rangle\to\mathcal{B}$ completely determined by 
$$
\nu(Xb_1Xb_2X\cdots Xb_nX)=E(xb_1xb_2x\cdots xb_nx)
$$
where $\mathcal{B}\langle X\rangle$ is the free algebra generated by an indeterminate variable $X$ over $\mathcal{B}$. 
\begin{defn}
Suppose that $(\mathcal{A},\mathcal{B},E)$ is an OV probability space. 
\begin{itemize}
  \item [(1)] 
  The subalgebras $(\mathcal{A}_i)_{i\in I}$ of $\mathcal{A}$ contain $\mathcal{B}$ are \emph{freely independent} over $\mathcal{B}$ if for all $n\in \mathbb{N}$, $a_1,\dots, a_n\in \mathcal{A}$, such that $a_j\in\mathcal{A}_{i_j}$ where $i_1,\dots,i_n\in I$, $i_1\neq \cdots \neq i_n$, and $E(a_j)=0$ for all $j$, then      
  $$
  E(a_1\cdots a_n) = 0.
  $$
  \item [(2)]
  The (possibly non-unital) $\mathcal{B}$-bimodule subalgebras $(\mathcal{A}_i)_{i\in I}$ of $\mathcal{A}$ are \emph{Boolean independent} over $\mathcal{B}$ if for all $n\in \mathbb{N}$, $a_1,\dots, a_n\in \mathcal{A}$ such that $a_j\in\mathcal{A}_{i_j}$ where $i_1,\dots,i_n\in I$, $i_1\neq \cdots \neq i_n$, then we have
  $$
  E(a_1\cdots a_n)=E(a_1)\cdots E(a_n).
  $$
  \item [(3)]
  Assume $I$ is equipped with a linear order $<$. The (possibly non-unital) $\mathcal{B}$-bimodule subalgebras $(\mathcal{A}_i)_{i\in I}$ of $\mathcal{A}$ are \emph{monotone independent} over $\mathcal{B}$ if  
  $$
  E(a_1\cdots a_{j-1}a_ja_{j+1}\cdots a_n) = E(a_1\cdots a_{j-1}E(a_j)a_{j+1}\cdots a_n) 
  $$
  whenever $a_j\in\mathcal{A}_{i_j}$, $i_1,\dots,i_n\in I$ and $i_{j-1}<i_j>i_{j+1}$ where one of the inequalities is eliminated if $j=1$ or $j=n$.
\end{itemize}
\end{defn}
Given an OV probability space $(\mathcal{A},\mathcal{B},E)$, elements $(x_i)_{i \in I}$ of $\mathcal{A}$ are said to be freely independent over $\mathcal{B}$ if the unital algebras $Alg(1,x_i)$, generated by elements $x_i$ over $\mathcal{B}$, are freely independent. Similarly, $(x_i)_{i\in I}$ are said to be Boolean (respectively monotone) independent if $Alg(x_i)$, the non-unital subalgebra generated by $x_i, (i\in I)$ over $\mathcal{B}$ form a Boolean (respectively monotone) independent family.

For a given OV probability space $(\mathcal{A},\mathcal{B},E)$ and $x\in\mathcal{A}$, we write
$$
x=\operatorname{Re}(x)+i\operatorname{Im}(x) \text{ where }\operatorname{Re}(x)=\frac{1}{2}(x+x^*) \text{ and }\operatorname{Im}(x)=\frac{1}{2i}(x-x^*). 
$$
Also, we write $x>0$ if $x$ is positive and invertible, and then the \emph{operator-valued upper half plane} is defined by 
$$
H^+(\mathcal{B})=\{b\in\mathcal{B} \mid \operatorname{Im}(b)>0\}. 
$$
Let $x\in\mathcal{A}$, we define the \emph{Cauchy transform of $x$} by 
$$
G_x(b)=E\left((b-x)^{-1}\right) \text{ for all }b\in\mathcal{B} \text{ with }b-x \text{ is invertible}. 
$$
It is known that $G_x$ is Fr\'{e}chlet analytic and maps $H^+(\mathcal{B})$ into $H^-(\mathcal{B}):=-H^+(\mathcal{B})$. Note that if we consider its matrix amplification $(G_x^{(m)})_{m=1}^{\infty}$ of $G_x$, then $(G_x^{(m)})_{m=1}^\infty$ encodes the operator-valued distribution of $x$ (see \cite{V2000} and \cite{VOI95}). In addition, given $x=x^*\in\mathcal{A}$, we also consider the following transforms: 
\begin{align*}
\psi_x(b)&=E\left((1-bx)^{-1}-1\right);\ &\eta_x(b)&= \widetilde{\psi}_x(b)\left(\psi_x(b)+1\right)^{-1};&  \\
\vartheta_x(b)&= E\left((1-bx)^{-1}bx\right);\ &\kappa_x(b)&=\left(1+\vartheta_x(b)\right)^{-1}\vartheta_x(b);& \\
\varrho_x(b)&= E\left(xb(1-xb)^{-1}\right);\ &\rho_x(b)&= \varrho_x(b)\left(1+\varrho_x(b)\right)^{-1}&
\end{align*} 
where $\widetilde{\psi}_x(b)=b^{-1}\psi_x(b)$. 
All the previous transforms are well-defined and Fr\'{e}chlet analytic on a neighborhood of the origin. Note that $\psi'_x(0)(\cdot)=(\cdot)E(x)$, therefore if we further assume that $E(x)$ is invertible, then $\psi'_x(0)$ is invertible, which implies that $\psi_x$ is invertible around $0$ by the inverse function theorem. Then the \emph{$S$-transform of $x$} is defined by 
$$
S_x(b)=b^{-1} (1+b)\psi_x^{\langle-1\rangle}(b), \text{ for }\|b\| \text{ small enough.} 
$$
Note that the operator-valued version of $S$-transform was first introduced by Voiculescu in \cite{VOI95}, and the previous construction of $S$-transform is based on Dykema's approach in \cite{Dy06}. Moreover, Dykema introduced the notion of the linked partition and use it to describe the reciprocal of 
$S$-transform, which is called the $T$-transform (see \cite{Dy07}).

Suppose that $(\mathcal{A},\mathcal{B},E)$ is an OV probability space, and $x=x^*$ and $y=y^*$ are two elements in $\mathcal{A}$. Then for $\|b\|$ small enough, we have the following formula.  
\begin{itemize}
    \item If $x$ and $y$ are free, and also $E(x)$ and $E(y)$ are both invertible, then
    \begin{equation}\label{Sconvolution}
    S_{xy}(b)=S_y(b)S_x\left(S_y(b)^{-1}bS_y(b)\right)\qquad \text{   (see \cite{Dy06}).}  
    \end{equation} and
    \begin{equation}\label{Tconvolution}
    T_{xy}(b)=T_x\left(T_y(b)bT_y(b)^{-1}\right)T_y(b) \qquad \text{   (see \cite{Dy07}).} 
  \end{equation}
    \item If $x$ and $y$ are Boolean independent, then
    \begin{equation}\label{Bconvolution}
    \eta_{(1+x)(1+y)}(b)=\eta_{1+x}(b)\eta_{1+y}(b)\qquad \text{   (see \cite{POP09}).}
    \end{equation}
    \item If $x-1$ and $y$ are monotone independent, then 
    \begin{eqnarray}\label{Kconvolution}
    \kappa_{yx}(b)&=&(\kappa_x\circ\kappa_y)(b), \\
    \rho_{xy}(b)&=& (\rho_x\circ \rho_y)(b) \qquad \text{   (see \cite{POP08}).}\nonumber  
    \end{eqnarray}
\end{itemize}
\begin{rem}
$(\mathcal{A},\mathcal{B},E)$ is called an \emph{operator-valued Banach probability space} if $\mathcal{A}$ is a unital Banach algebra, $\mathcal{B}$ is a subalgebra of $\mathcal{A}$ that containing $1_{\mathcal{A}}$, and $E:\mathcal{A}\to \mathcal{B}$ is a linear, bounded, $\mathcal{B}$-$\mathcal{B}$ bimodule projection. Note that \eqref{Sconvolution}, \eqref{Bconvolution}, and \eqref{Kconvolution} also hold if $(\mathcal{A},\mathcal{B},E)$ is only an operator-valued Banach non-commutative probability space. 
\end{rem}
\subsection{The Notions of Operator-Valued Infinitesimal Independence}

$\ $

We call $(\mathcal{A},\mathcal{B},E,E')$ an \emph{operator-valued infinitesimal probability space} (or \emph{OVI probability space} for short) if $(\mathcal{A},\mathcal{B},E)$ is an OV probability space, and $E':\mathcal{A}\to \mathcal{B}$ is a linear selfadjoint bounded map such that 
$$
E'(1)=0 \text{ and }E'(b_1ab_2)=b_1E'(a)b_2 \text{ for all }b_1,b_2\in\mathcal{B}, \text{ and }a\in\mathcal{A}. 
$$
Given $x\in \mathcal{A}$, the \emph{operator-valued infinitesimal distribution of $x$} is a pair of linear maps $(\nu,\nu')$ where $\nu$ is the distribution of $x$ and $\nu':\mathcal{B}\langle X\rangle\to \mathcal{B}$ is the map completely determined by 
$$
\nu'(Xb_1Xb_2X\cdots Xb_nX)=E'(xb_1xb_2x\cdots xb_nx). 
$$
\begin{defn}
Suppose $(\mathcal{A},\mathcal{B},E, E')$ is an OVI probability space. 
\begin{itemize}
    \item [(1)]
The sub-algebras $(\mathcal{A}_i)_{i\in I}$ of $\mathcal{A}$ that contain $\mathcal{B}$ are called \emph{infinitesimally
free} with respect to $(E,E')$ if for $i_1,i_2,\dots,i_n\in I$, $i_1\neq i_2 \neq i_3\cdots\neq i_n$, and $a_j\in \mathcal{A}_{i_j}$ with $E(a_{j})=0$ for all $j=1,2,\dots,n$, the following two conditions hold: 
\begin{eqnarray*}
E(a_1\cdots a_n) &=& 0\ \ ; \\ 
E'(a_1\cdots a_n) &=& \sum\limits_{j=1}^n E(a_1\cdots a_{j-1}E'(a_j)a_{j+1}\cdots a_n). 
\end{eqnarray*}
    \item [(2)]
The (possibly non-unital) $\mathcal{B}$-$\mathcal{B}$ bimodule subalgebras $(\mathcal{A}_i)_{i\in I}$ of $\mathcal{A}$ are called \emph{infinitesimally Boolean independent} if for all $n\in\mathbb{N}$ and $a_1,\dots, a_n\in\mathcal{A}$ such that $a_j\in\mathcal{A}_{i_j}$ where $i_1\neq i_2\neq \dots \neq i_n\in I$, we have
\begin{eqnarray*}
E(a_1\cdots a_n) &=& E(a_1)\cdots E(a_n) ; \\
E'(a_1\cdots a_n) &=&\sum_{j=1}^n E(a_1)\cdots E(a_{j-1})E'(a_j)E(a_{j+1})\cdots E(a_{n}). 
\end{eqnarray*}
    \item [(3)]
Assume that $I$ is equipped with a linear order $<$. The (possibly non-unital) $\mathcal{B}$-$\mathcal{B}$ bimodule subalgebras $(\mathcal{A}_i)_{i\in I}$ of $\mathcal{A}$ are called \emph{infinitesimally monotone independent} if 
\begin{eqnarray*}
E(a_1\cdots a_j \cdots a_n) &=& E(a_1\cdots a_{j-1}E(a_j)a_{j+1}\cdots a_n) ;\\
E'(a_1\cdots a_j \cdots a_n) &=&  E(a_1\cdots a_{j-1}E'(a_j)a_{j+1}\cdots a_n) +  E'(a_1\cdots a_{j-1}E(a_j)a_{j+1}\cdots a_n) 
\end{eqnarray*}
whenever $a_j\in\mathcal{A}_{i_j}, i_j\in I$ for all $j$ and $ i_{j-1} < i_j > i_{j+1}$, where one of the inequalities is eliminated if $j=1$ or $j=n$.
\end{itemize}
\end{defn}
For a given OVI probability space $(\mathcal{A},\mathcal{B},E,E')$, the corresponding \emph{upper triangular probability space} $(\widetilde{\mathcal{A}},\widetilde{\mathcal{B}},\widetilde{E})$ consists of 
$$
\widetilde{\mathcal{A}}=\left\{\begin{bmatrix}
a & a' \\ 0 & a 
\end{bmatrix} \Bigg|\ a,a'\in\mathcal{A} \right\}, \widetilde{\mathcal{B}}=\left\{ \begin{bmatrix}
b & b' \\ 0 & b \end{bmatrix} \Bigg|\ b,b'\in\mathcal{B}
  \right\},
$$
and $\widetilde{E}:\widetilde{\mathcal{A}}\to\widetilde{\mathcal{B}}$ is given by
$$
\widetilde{E}\left(\begin{bmatrix}
a & a' \\ 0 & a
\end{bmatrix}\right)=\begin{bmatrix}
E(a) & E'(a)+E(a') \\ 0 & E(a)
\end{bmatrix}.
$$
Consider an OVI probability space $(\mathcal{A},\mathcal{B},E,E')$ and its corresponding upper triangular probability space, the following properties establish the connection between these two spaces (see \cite{DPL21} and \cite{PL19}).  
\begin{prop}\label{UpperProp}
Sub-algebras $(\mathcal{A})_{i\in I}$ that contain $\mathcal{B}$ are infinitesimally free with respect to $(E,E')$ if and only if $(\widetilde{\mathcal{A}}_i)_{i\in I}$ are free with respect to $\widetilde{E}$, where  
$$\widetilde{\mathcal{A}}_i=\left\{ \begin{bmatrix} a & a' \\ 0 & a 
\end{bmatrix} \Bigg |\ a,a'\in\mathcal{A}_i\right\}  \text{ for each }i\in I.$$  
Similarly, $\mathcal{B}$-$\mathcal{B}$ bimodule subalgebras $(\mathcal{A}_i)_{i\in I}$ are infinitesimally Boolean (respectively monotone) independent with respect to $(E,E')$ if and only if $(\widetilde{\mathcal{A}}_i)_{i\in I}$ are Boolean (respectively monotone) independent with respect to $\widetilde{E}$.
\end{prop}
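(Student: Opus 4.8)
The plan is to push every statement through the single algebraic fact that multiplication in $\widetilde{\mathcal{A}}$ obeys a Leibniz rule. Writing $A_j=\begin{bmatrix} a_j & a_j' \\ 0 & a_j\end{bmatrix}\in\widetilde{\mathcal{A}}_{i_j}$, a straightforward induction on $n$ gives
\begin{equation*}
A_1\cdots A_n=\begin{bmatrix} a_1\cdots a_n & \sum_{j=1}^n a_1\cdots a_{j-1}\,a_j'\,a_{j+1}\cdots a_n \\ 0 & a_1\cdots a_n\end{bmatrix}.
\end{equation*}
Applying $\widetilde{E}$ and reading off the two independent entries, the diagonal entry of $\widetilde{E}(A_1\cdots A_n)$ is $E(a_1\cdots a_n)$ and its superdiagonal entry is $E'(a_1\cdots a_n)+\sum_{j=1}^n E(a_1\cdots a_{j-1}\,a_j'\,a_{j+1}\cdots a_n)$. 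Thus each identity in $\widetilde{\mathcal{A}}$ splits into a diagonal part, which I expect to coincide with the ordinary ($E$-level) independence, and a superdiagonal part, which should encode the infinitesimal ($E'$-level) condition.

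For the free case I would first record that $\widetilde{E}(A_j)=0$ is equivalent to $E(a_j)=0$ together with $E(a_j')=-E'(a_j)$. To prove ``$(\widetilde{\mathcal{A}}_i)$ free $\Rightarrow$ $(\mathcal{A}_i)$ infinitesimally free'', I start from centered $a_j$ (so $E(a_j)=0$) and lift them to $A_j=\begin{bmatrix} a_j & -E'(a_j) \\ 0 & a_j\end{bmatrix}$; since $\mathcal{B}\subseteq\mathcal{A}_{i_j}$ and $E$ restricts to the identity on $\mathcal{B}$, these are $\widetilde{E}$-centered, so freeness gives $\widetilde{E}(A_1\cdots A_n)=0$, whose diagonal is the freeness identity and whose superdiagonal (after pulling the scalar $E'(a_j)\in\mathcal{B}$ outside $E$) is exactly the defining $E'$-formula. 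For the converse I take arbitrary $\widetilde{E}$-centered $A_j$; the diagonal conclusion is immediate, and for the superdiagonal I substitute the infinitesimal-freeness formula for $E'(a_1\cdots a_n)$ and use $E'(a_j)=-E(a_j')$ to rewrite the whole superdiagonal as $\sum_j E\big(a_1\cdots a_{j-1}(a_j'-E(a_j'))a_{j+1}\cdots a_n\big)$. Each summand is an alternating word whose every factor is centered (the inserted $a_j'-E(a_j')\in\mathcal{A}_{i_j}$ is centered), so it vanishes by freeness and the superdiagonal is $0$.

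The Boolean and monotone cases run along the same lines, with no centering constraint. For the direction producing the infinitesimal conditions I specialize to lifts with $a_j'=0$, so that the superdiagonal of $\widetilde{E}(A_1\cdots A_n)$ reduces to $E'(a_1\cdots a_n)$; comparing this with the superdiagonal of $\widetilde{E}(A_1)\cdots\widetilde{E}(A_n)$ in the Boolean case (respectively of the peak-replacement $\widetilde{E}(A_1\cdots A_{j-1}\widetilde{E}(A_j)A_{j+1}\cdots A_n)$ in the monotone case) reads off the $E$- and $E'$-identities simultaneously. For the reverse implications I keep general $A_j$ and treat the extra words $a_1\cdots a_{j-1}a_j'a_{j+1}\cdots a_n$ via the ordinary rule: in the Boolean case each such word is still alternating, so ordinary Boolean factorization applies and the superdiagonal collapses to the infinitesimal Boolean identity; in the monotone case each such word retains position $j$ as a peak, so the ordinary monotone reduction applies term by term and, after cancellation, leaves exactly the infinitesimal monotone identity.

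I expect the superdiagonal bookkeeping to be the main obstacle. For freeness the crux is recognizing that, after substitution, the leftover terms are alternating words in centered elements and hence killed by ordinary freeness, so that the entire infinitesimal content is forced by freeness at the $E$-level on the doubled algebra. For the monotone case the delicate point is checking that each auxiliary word obtained by inserting an $a_j'$ continues to satisfy the relevant peak and order hypotheses, so that the ordinary monotone reduction is legitimately applicable term by term; once this is verified, the cancellations that isolate the infinitesimal identity are routine.
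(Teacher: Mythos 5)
Your proof is correct and takes essentially the same route as the literature the paper relies on: Proposition \ref{UpperProp} is stated here without proof (citing \cite{PL19} and \cite{DPL21}), and those proofs proceed exactly as you do, via the Leibniz product formula in $\widetilde{\mathcal{A}}$, entrywise comparison of $\widetilde{E}$, the centering lift $a_j'=-E'(a_j)$ in the free case, and the choice $a_j'=0$ in the Boolean and monotone cases. One cosmetic slip: $E'(a_j)\in\mathcal{B}$ is not a scalar and cannot be pulled outside $E$, but this is harmless since your superdiagonal identity already coincides, with $E'(a_j)$ left in place inside the word, with the operator-valued condition $E'(a_1\cdots a_n)=\sum_{j=1}^n E\bigl(a_1\cdots a_{j-1}E'(a_j)a_{j+1}\cdots a_n\bigr)$.
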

Note that for an OVI probability space $(\mathcal{A},\mathcal{B},E,E')$, the corresponding upper triangular probability space $(\widetilde{\mathcal{A}},\widetilde{\mathcal{B}},\widetilde{E})$ is an operator-valued Banach probability space where we put a norm on $\widetilde{\mathcal{A}}$ by
$$
\left\|A\right\|_{\widetilde{\mathcal{A}}}:=\|a\|_{\mathcal{A}}+\|a'\|_{\mathcal{A}} \qquad \text{ for }A=\begin{bmatrix}
a & a' \\ 0 & a\end{bmatrix}\in \widetilde{\mathcal{A}},
$$
(see \cite{DPL21} and \cite{PL19}). We will omit the subindex in the norm, as it is clear that we use $\|\cdot\|_{\widetilde{\mathcal{A}}}$ on upper triangular matrices and $\|\cdot\|_{\mathcal{A}}$ on $\mathcal{A}$.

Suppose that $(\mathcal{A},\mathcal{B},E,E')$ is an OVI probability space and $x=x^*\in\mathcal{A}$ be given, the \emph{infinitesimal Cauchy transform of $x$} is defined by
$$
g_x(b)=E\left((b-x)^{-1}\right), \text{ for all }b\in\mathcal{B} \text{ with }b-x \text{ is invertible}. 
$$
Then $g_x$ is Fr\'{e}chlet analytic on $H^+(\mathcal{B})$; furthermore, the matrix amplifications $(g_x^{(m)})_{m=1}^\infty$ encodes all possible infinitesimal moments of $x$ (see \cite{PL19}). 

In addition, let $x=x^*\in\mathcal{A}$, the \emph{infinitesimal moment-generating function} $\partial \psi_x$ of $x$ is defined by
\begin{equation}\label{defn:inf_moment_function}
\partial \psi_x(b)=E'((1-bx)^{-1}-1)=\sum\limits_{n\geq 1} E'((bx)^n) \text{ for }\|b\|<\|x\|^{-1}.
\end{equation}
Then if further assume $E(x)$ is invertible, then we define the \emph{infinitesimal $S$-transform} of $x$ by 
$$
\partial S_x(w)=-(\psi_{x}^{\langle -1\rangle})'(w)\left(\partial\psi_x(\psi_x^{\langle -1 \rangle}(w))\right) \text{ for }\|w\| \text{ small enough}.
$$
In \cite{PL19}, the author provide the OVI multiplicative convolution as follows.
\begin{thm}\label{OVIMfcon}
Suppose that $(\mathcal{A},\mathcal{B},E,E')$ is an OVI probability space. Let $x=x^*$ and $y=y^*$ be two infinitesimally freely independent random variables in $\mathcal{A}$ such that $E(x)$ and $E(y)$ are invertible. Then for $\|w\|$ small enough, we have
\begin{eqnarray*}
&& \partial S_{xy}(w) \\ 
&=& S_y(w) S_x'(S_y(w)^{-1}wS_y(w)) \left( S_y(w)^{-1}w\partial S_y(w)-S_y(w)^{-1}\partial S_y(w)S_y(w)^{-1}wS_y(w) \right) \\
 &+& S_y(w) \partial S_x(S_y(w)^{-1}wS_y(w)) + \partial S_y(w) S_x(S_y(w)^{-1}wS_y(w)). 
\end{eqnarray*}
\end{thm}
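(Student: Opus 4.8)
The plan is to lift everything to the upper triangular probability space $(\widetilde{\mathcal{A}},\widetilde{\mathcal{B}},\widetilde{E})$ and invoke the already established operator-valued free multiplicative formula \eqref{Sconvolution} there. Set $\widetilde{x}=\begin{bmatrix} x & 0 \\ 0 & x\end{bmatrix}$ and $\widetilde{y}=\begin{bmatrix} y & 0 \\ 0 & y\end{bmatrix}$. By Proposition \ref{UpperProp}, the infinitesimal freeness of $x$ and $y$ with respect to $(E,E')$ is exactly the freeness of $\widetilde{x}$ and $\widetilde{y}$ with respect to $\widetilde{E}$ in the operator-valued Banach probability space $(\widetilde{\mathcal{A}},\widetilde{\mathcal{B}},\widetilde{E})$, so \eqref{Sconvolution} is available for the pair $\widetilde{x},\widetilde{y}$. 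Since $\widetilde{x}\widetilde{y}=\begin{bmatrix} xy & 0 \\ 0 & xy\end{bmatrix}$, the diagonal of $S_{\widetilde{x}\widetilde{y}}$ reproduces $S_{xy}$, and the idea is to read the infinitesimal formula off the upper-right corner.

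The first and main step is a lifting lemma: for $z=z^*$ with $E(z)$ invertible and $\widetilde{b}=\begin{bmatrix} b & b' \\ 0 & b\end{bmatrix}$,
$$
S_{\widetilde{z}}(\widetilde{b})=\begin{bmatrix} S_z(b) & \partial S_z(b)+S_z'(b)(b') \\ 0 & S_z(b)\end{bmatrix},
$$
where $S_z'(b)(\cdot)$ is the Fr\'{e}chet derivative. I would prove this by following the construction of the $S$-transform through the upper triangular calculus. A direct block computation gives
$$
\psi_{\widetilde{z}}(\widetilde{b})=\begin{bmatrix} \psi_z(b) & \partial\psi_z(b)+\psi_z'(b)(b') \\ 0 & \psi_z(b)\end{bmatrix},
$$
using the block upper triangular inverse of $1-\widetilde{b}\widetilde{z}$ together with the definition of $\widetilde{E}$; here the off-diagonal splits into an $E'$-term, giving $\partial\psi_z$, and an $E$-term that is exactly the Fr\'{e}chet derivative of $\psi_z$. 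One then checks that functions of this \emph{lift type} are closed under functional inversion and under left multiplication by the purely algebraic factor $\widetilde{b}^{-1}(1+\widetilde{b})$, and that the infinitesimal part produced by inverting $\psi_{\widetilde{z}}$ is precisely $\partial S_z$ (the dual-number chain rule, recovering the defining formula $\partial S_z(b)=-(\psi_z^{\langle-1\rangle})'(b)(\partial\psi_z(\psi_z^{\langle-1\rangle}(b)))$ from the off-diagonal). Because $\widetilde{E}(\widetilde{z})=\begin{bmatrix} E(z) & E'(z) \\ 0 & E(z)\end{bmatrix}$ is upper triangular with invertible diagonal $E(z)$, it is invertible, so every transform is analytic and invertible for $\|\widetilde{b}\|$ small.

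With the lemma in hand, I would evaluate \eqref{Sconvolution} at $\widetilde{w}=\begin{bmatrix} w & 0 \\ 0 & w\end{bmatrix}$; the choice $w'=0$ isolates the infinitesimal part, since then the off-diagonal of $S_{\widetilde{x}\widetilde{y}}(\widetilde{w})$ is exactly $\partial S_{xy}(w)$ (note $E(xy)=E(x)E(y)$ is invertible by freeness, so the lemma applies to $xy$). By the lemma, $S_{\widetilde{y}}(\widetilde{w})=\begin{bmatrix} S_y(w) & \partial S_y(w) \\ 0 & S_y(w)\end{bmatrix}$, and a block computation of $A:=S_{\widetilde{y}}(\widetilde{w})^{-1}\widetilde{w}\,S_{\widetilde{y}}(\widetilde{w})$ gives $A=\begin{bmatrix} a & a' \\ 0 & a\end{bmatrix}$ with $a=S_y(w)^{-1}wS_y(w)$ and $a'=S_y(w)^{-1}w\partial S_y(w)-S_y(w)^{-1}\partial S_y(w)S_y(w)^{-1}wS_y(w)$. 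Applying the lemma again to $S_{\widetilde{x}}(A)$ and multiplying on the left by $S_{\widetilde{y}}(\widetilde{w})$, the $(1,2)$-entry of $S_{\widetilde{y}}(\widetilde{w})S_{\widetilde{x}}(A)$ is
$$
S_y(w)\bigl(\partial S_x(a)+S_x'(a)(a')\bigr)+\partial S_y(w)\,S_x(a),
$$
which, matched against the off-diagonal of $S_{\widetilde{x}\widetilde{y}}(\widetilde{w})$, is exactly the asserted formula for $\partial S_{xy}(w)$.

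The main obstacle is the lifting lemma, specifically tracking the infinitesimal part through the functional inversion $\psi_{\widetilde{z}}\mapsto\psi_{\widetilde{z}}^{\langle-1\rangle}$: one must verify that inverting a lift-type map again yields a lift-type map whose infinitesimal component is precisely $\partial(\psi_z^{\langle-1\rangle})$, so that the resulting off-diagonal is the intended $\partial S_z$ and not some other combination. Once this is settled, the remaining steps reduce to routine $2\times 2$ block algebra over $\mathcal{B}$ and careful bookkeeping of domains to guarantee that every inverse and composition exists for $\|w\|$ small enough.
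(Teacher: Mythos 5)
Your proposal is correct and follows essentially the same route as the paper: reduce to the upper triangular space via Proposition \ref{UpperProp}, evaluate the operator-valued formula \eqref{Sconvolution} at $\widetilde{w}=\operatorname{diag}(w,w)$, and compare $(1,2)$-entries, which is exactly the template used for Theorems \ref{OVITfcon}, \ref{OVIbcon}, and \ref{OVIMprod} and the proof in \cite{PL19}. Your lifting lemma is precisely the block formula for $S_X$ that the paper records at the start of its free-case subsection (obtained from the stated block formula for $\Psi_X$ by the dual-number chain-rule argument you describe), and your block computations of $A$ and of the final $(1,2)$-entry check out.
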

We note that when $\mathcal{B}=\mathbb{C}$, if self-adjoint elements $x$ and $y$ are infinitesimally free, we obtain
\begin{equation}\label{formula: scalar-S-convolution}
\partial S_{xy}(w)=\partial S_x(w)S_y(w)+S_x(w)\partial S_y(w)  \text{ for }\|w\| \text{ small}.    
\end{equation}

\subsection{Infinitesimal Free Probability} 

The concept of operator-valued infinitesimal freeness was discussed in the preceding subsection, wherein (scalar-valued) infinitesimal freeness serves as a special case when considering $\mathcal{B}=\mathbb{C}$.
In this subsection, our emphasis shifts to the combinatorial facet of scalar-valued infinitesimal free probability theory, offering essential tools for Section \ref{section: inf t-coeff}.

A pair $(\mathcal{A},\varphi)$ is called a \emph{non-commutative probability space} (or \emph{ncps} for short) if $\mathcal{A}$ is a unital algebra over $\mathbb{C}$ and $\varphi:\mathcal{A}\to\mathbb{C}$ is linear with $\varphi(1)=1$. If $(\mathcal{A},\varphi)$ is a ncps with an additional linear functional $\varphi':\mathcal{A}\to\mathbb{C}$ such that $\varphi'(1)=0$, then we call the triple $(\mathcal{A},\varphi,\varphi')$ an \emph{infinitesimal probability space}(or \emph{incps} for short). 

The unital subalgebras $\{\mathcal{A}_i\}_{i\in I}$ are said to be \emph{ freely independent} (or \emph{free} for short) if for all $n\geq 0$, $a_1,\dots,a_n\in \mathcal{A}$ are such that $a_j\in \mathcal{A}_{i_j}$ with $\varphi(a_j)=0$ where $i_1,\dots,i_n\in I$, $i_1\neq \cdots \neq i_n$, then
\begin{eqnarray*}
\varphi(a_1\cdots a_n) =0.
\end{eqnarray*}
Under the same assumption, if we additionally have 
$$
\varphi'(a_1\cdots a_n)= \sum\limits_{j=1}^n \varphi'(a_j)\varphi(a_1\cdots a_{j-1}a_{j+1}\cdots a_n),
$$
then we say $\{\mathcal{A}_i\}_{i\in I}$ are \emph{infinitesimally free}. 

The combinatorial aspect of infinitesimal freeness relies on the notions of free cumulants and infinitesimal free cumulants. To introduce them, we first require the following concept.
A \emph{non-crossing partition} of $[n]:=\{1,2,\cdots,n\}$ is a collection of disjoint subsets $V_1,\dots,V_s$ of $[n]$ (called blocks) which satisfy the following conditions: 
\begin{itemize}
    \item[(1)]  $V_1\cup \cdots \cup V_s=[n]$;
    \item[(2)]  there are no blocks $V_r$ and $V_s$ that $i,k\in V_r$ and $j,l\in V_s$ where $1\leq i<j<k<l\leq n$.
\end{itemize}
The set of all non-crossing partitions is denoted by $NC(n)$. 

To define the free and infinitesimal free cumulants, we introduce the following notations: 
Suppose $\{f_n:\mathcal{A}^n\to\mathbb{C}\}_{n\geq 1}$ and $\{f_n':\mathcal{A}^n\to\mathbb{C}\}_{n\geq 1}$ are two sequences of multi-linear functionals. Given a partition $\pi\in NC(n)$, we define
$$
f_{\pi}(a_1,\dots,a_n)=\prod_{V\in \pi}f_{|V|}((a_1,\dots,a_n)|_V)
$$
where $(a_1,\dots, a_n)|_V=(a_{i_1},\dots, a_{i_s}) $ if  $V=\{i_1<\cdots<i_s\}$.

Furthermore, for a fixed $V\in \pi$,
we define $\partial f_{\pi,V}$ be the map that is equal to $f_{\pi}$ except for the block $V$, where we replace $f_{|V|}$ by $f'_{|V|}$. Then we define $\partial f_{\pi}$ by
$$
\partial f_{\pi}(a_1,\dots,a_n)= \sum_{V\in \pi}\partial f_{\pi,V}(a_1,\dots,a_n).
$$

The \emph{free cumulants} $\{r_n:\mathcal{A}^n\to\mathbb{C}\}_{n\geq 1}$ are defined inductively in terms of moments via 
$$
\varphi(a_1\cdots a_n)=\sum_{\pi\in NC(n)}r_{\pi}(a_1,\dots,a_n).
$$
Additionally, the \emph{infinitesimal free cumulants} $\{r_n':\mathcal{A}^n\to\mathbb{C}\}_{n\geq 1}$ are defined inductively in terms of moments and infinitesimal moments via  
$$
\varphi'(a_1,\dots,a_n)=\sum_{\pi\in NC(n)}\partial r_{\pi}(a_1,\dots,a_n). 
$$
$\{r_n,r_n'\}_{n\geq 1}$ can be used to characterize infinitesimal freeness, as stated in the following result. 
\begin{thm}[\cite{FN10}]
Suppose that $(\mathcal{A},\varphi,\varphi')$ is an infinitesimal probability space, and $\mathcal{A}_i$ is a unital subalgebra of $\mathcal{A}$ for each $i\in I$. Then $(\mathcal{A}_i)_{i\in I}$ are infinitesimally free if and only if for each $s\geq 2$ and $i_1,\dots,i_s\in I$ which are not all equal, and for $a_1\in \mathcal{A}_{i_1},\dots,a_s\in \mathcal{A}_{i_s}$, we have $r_n(a_1,\dots,a_s)=r_n'(a_1,\dots,a_s)=0.$
\end{thm}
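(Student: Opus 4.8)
The plan is to deduce the statement from the vanishing-of-mixed-cumulants characterization of freeness, transported through the upper-triangular lift of Proposition~\ref{UpperProp} in the scalar case $\mathcal{B}=\mathbb{C}$. There the upper-triangular scalar algebra $\widetilde{\mathcal{B}}$ is the algebra of dual numbers $R:=\mathbb{C}[\epsilon]/(\epsilon^2)$ with $\epsilon=\left[\begin{smallmatrix}0&1\\0&0\end{smallmatrix}\right]$, and, writing $\widehat{a}:=\left[\begin{smallmatrix}a&0\\0&a\end{smallmatrix}\right]$ for the diagonal lift, the expectation satisfies $\widetilde{\varphi}(\widehat{a})=\varphi(a)+\epsilon\,\varphi'(a)$. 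By Proposition~\ref{UpperProp}, $(\mathcal{A}_i)_{i\in I}$ are infinitesimally free if and only if $(\widetilde{\mathcal{A}}_i)_{i\in I}$ are free over $R$ with respect to $\widetilde{\varphi}$, so it suffices to translate ``freeness over $R$'' into the simultaneous vanishing of the mixed scalar cumulants $\kappa_n$ and $\kappa_n'$.

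First I would invoke the operator-valued analogue of Speicher's theorem: the subalgebras $(\widetilde{\mathcal{A}}_i)$, which all contain $\widetilde{\mathcal{B}}$, are free over $R$ if and only if every mixed $R$-valued free cumulant $\widetilde{\kappa}_n$ vanishes. Since $\mathcal{B}=\mathbb{C}$ is scalar, $R$ is commutative and central in $\widetilde{\mathcal{A}}$, so the $R$-valued cumulants are $R$-multilinear and, on arguments from $\widetilde{\mathcal{A}}$, the operator-valued moment--cumulant relation collapses to the product form $\widetilde{\kappa}_{\pi}=\prod_{V\in\pi}\widetilde{\kappa}_{|V|}$. As $\left[\begin{smallmatrix}a&a'\\0&a\end{smallmatrix}\right]=\widehat{a}+\epsilon\,\widehat{a'}$, every element of $\widetilde{\mathcal{A}}_i$ is an $R$-combination of diagonal lifts $\widehat{a}$ with $a\in\mathcal{A}_i$, so by $R$-multilinearity it suffices to test $\widetilde{\kappa}_n$ on diagonal arguments.

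The crux is the identification $\widetilde{\kappa}_n(\widehat{a}_1,\dots,\widehat{a}_n)=\kappa_n(a_1,\dots,a_n)+\epsilon\,\kappa_n'(a_1,\dots,a_n)$, which I would establish by induction on $n$ through M\"obius inversion over $NC(n)$. The $R$-valued moments of diagonal lifts are $\widetilde{\varphi}(\widehat{a}_1\cdots\widehat{a}_n)=\varphi(a_1\cdots a_n)+\epsilon\,\varphi'(a_1\cdots a_n)$; comparing the coefficients of $\epsilon^0$ and $\epsilon^1$ in $\widetilde{\varphi}(\widehat{a}_1\cdots\widehat{a}_n)=\sum_{\pi}\widetilde{\kappa}_{\pi}$ with the defining relations of $\kappa_n$ and $\kappa_n'$ gives the claim, because reducing $\widetilde{\kappa}_{\pi}=\prod_{V}\bigl(\kappa_{|V|}+\epsilon\,\kappa_{|V|}'\bigr)$ modulo $\epsilon^2$ is exactly the Leibniz rule that defines $\partial\kappa_{\pi}$. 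Granting this, $\widetilde{\kappa}_n$ vanishes on mixed diagonal arguments precisely when both $\kappa_n$ and $\kappa_n'$ do, and combining the three paragraphs yields the equivalence in both directions.

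The main obstacle is this identification: one must be certain that the $R$-valued cumulants of the diagonal copies factor through the scalar data $(\kappa_n,\kappa_n')$ and that their $\epsilon$-component is precisely the infinitesimal cumulant and not some other correction. This rests on the centrality of $R=\widetilde{\mathcal{B}}$ forcing the operator-valued recursion into the multiplicative form $\prod_{V}\widetilde{\kappa}_{|V|}$, so that reduction modulo $\epsilon^2$ coincides with Leibniz differentiation; checking this, together with the reduction to diagonal arguments, is where the real work lies. As a self-contained alternative avoiding operator-valued machinery, one can argue combinatorially: assuming the mixed $\kappa_n,\kappa_n'$ vanish with the $a_j$ centered and alternating, expand $\varphi'(a_1\cdots a_n)=\sum_{\pi}\partial\kappa_{\pi}$; since every non-crossing partition contains an interval block, no partition into monochromatic blocks of size $\geq 2$ exists, so the only surviving terms are those whose differentiated block is a singleton $\{j\}$, and these collapse to $\sum_{j}\varphi'(a_j)\,\varphi(a_1\cdots a_{j-1}a_{j+1}\cdots a_n)$, the defining infinitesimal-freeness relation; the converse follows by differentiating $\kappa_n=\sum_{\pi}\mu(\pi,1_n)\,\varphi_{\pi}$ and inserting the hypothesis.
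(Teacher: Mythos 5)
Your proposal is correct, but be aware that the paper contains no proof of this statement to compare against: it is imported verbatim from F\'evrier--Nica \cite{FN10}. What you have reconstructed is the standard argument, and it runs on exactly the mechanism this paper uses for its neighboring results: identifying the scalar upper-triangular base $\widetilde{\mathcal{B}}$ with the dual numbers $\mathbb{C}[\epsilon]/(\epsilon^2)$, reducing to diagonal lifts by $R$-multilinearity (all $\epsilon^2$-terms die), and using centrality and commutativity of $\widetilde{\mathcal{B}}$ to collapse the operator-valued nested cumulants $\widetilde{\kappa}_{\pi}$ into the product form $\prod_{V\in\pi}\widetilde{\kappa}_{|V|}$ --- this is precisely the ``due to the commutativity of $\widetilde{\mathbb{C}}$'' device behind Proposition \ref{prop: kappa' vs t'} and \eqref{formula: matrix-valued of t_s}, of which your key identity $\widetilde{\kappa}_n(\widehat{a}_1,\dots,\widehat{a}_n)=\kappa_n(a_1,\dots,a_n)+\epsilon\,\kappa_n'(a_1,\dots,a_n)$ is the cumulant analogue; your uniqueness-by-induction justification of it via the $\epsilon^0$- and $\epsilon^1$-components of the moment--cumulant relation is sound. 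Three remarks. First, your argument leans on Speicher's vanishing-of-mixed-cumulants characterization of amalgamated freeness over the base ring $R$; this is purely algebraic and valid over any unital base, but it should be cited, and one should note $\widetilde{\mathcal{A}}_i\supseteq\widetilde{\mathcal{B}}$ (true since $\mathcal{A}_i$ is unital), so the operator-valued theorem indeed applies. Second, in your self-contained combinatorial alternative, the direction ``vanishing implies infinitesimal freeness'' is essentially complete --- the surviving terms are exactly those with a differentiated singleton $\{j\}$, and the map $\sigma\mapsto\sigma\cup\{\{j\}\}$ matches them bijectively with the partitions computing $\varphi(a_1\cdots a_{j-1}a_{j+1}\cdots a_n)$, since adjoining a singleton block never creates a crossing --- but the converse is only gestured at: ``differentiating $\kappa_n=\sum_{\pi}\mu(\pi,1_n)\varphi_{\pi}$'' presupposes an infinitesimal M\"obius-inversion identity $\kappa_n'=\sum_{\pi}\mu(\pi,1_n)\,\partial\varphi_{\pi}$ together with a centering/interval-block argument, neither of which you supply. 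Since your main dual-numbers route already yields both implications, the gap in the alternative is harmless, but as written only the first route constitutes a complete proof.
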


\subsection{Non-crossing Linked Partitions and $t$-coefficients }

In \cite{Dy07}, Dykema introduces the notion of the linked partitions and $t$-coefficients, offering a new combinatorial aspect for the study of free multiplicative convolutions. Under the assumption $\mathcal{B}=\mathbb{C}$, 
Popa \cite{POP-linked-08} investigates the properties of $t$-coefficients in the multi-variable case. In \cite{MN10}, Mastnak and Nica concentrate on the single variable case, and they establish a connection between the linked partitions and the non-crossing partitions (also see \cite{N10}). In this subsection, we will provide a brief review of their works.   

Given $n\in \mathbb{N}$, a \emph{non-crossing linked partition} of $[n]$ is a collection of subsets $V_1,\dots,V_k$ that satisfies same properties $(1)$ and $(2)$ as non-crossing partitions, but we allow for any $r,s\in [k]$, $|V_r\cap V_s|\leq 1$. If $V_r\cap V_s=\{j\},$ then $j$ is the minimal element of only one of the blocks $V_r$ and $V_s$. The set of all non-crossing linked partitions is denoted by $NCL(n)$. In addition for a given $\pi\in NCL(n)$, we set 
$$s(\pi)=\{k\in [n] \mid k\text{ is not a minimal element in any blocks of } \pi \}.$$

Let $(\mathcal{A},\varphi)$ be an ncps, and define $\hat{\mathcal{A}}=\{a\in\mathcal{A}\mid \varphi(a)\neq 0\}.$ The \emph{$t$-coefficients} $\{t_n:\mathcal{A}\times\hat{\mathcal{A}}^{n}\to\mathbb{C}\}_{n\geq 0}$ is a sequence of maps that is defined inductively by the following equation:
$$
\varphi(a_1\cdots a_n)=\sum_{\pi\in NCL(n)} t_{\pi}(a_1,\dots,a_n)
$$
where 
$$
t_{\pi}(a_1,\dots,a_n)=\prod_{\substack{V\in \pi; \\ V=\{i_1,\dots,i_s\} }}t_{s-1}(a_{i_1},\dots,a_{i_s})\prod_{k\in s(\pi)}t_0(a_k). 
$$ 
For $\pi\in NCL(n)$, we say $i,j\in [n]$ are connected if there exist blocks $V_1,\dots,V_s\in \pi$ such that $i\in V_1$ and $j\in V_s$ with $V_k\cap V_{k+1}\neq \phi$ for all $k\in [s-1]$. We denote it by $i\sim_{\pi} j.$ 
\begin{nota}
Given $\pi\in NC(n)$, we define $$
\langle\pi\rangle=\{\sigma\in NCL(n)\mid i\sim_{\pi}j \text{ if and only if }i\sim_{\sigma}j \text{ for all }i,j\in [n]\}.$$
\end{nota}
In \cite{POP-linked-08}, Popa showed that the free cumulants can be expressed in terms of $t$-coefficients. Moreover, he provided a characterization of freeness by using $t$-coefficients. We review these results below.
\begin{prop}\label{prop: t-coefficient}
For any $n$, and $a_1,\dots,a_n\in\hat{\mathcal{A}}$, we have
\begin{equation}
r_n(a_1,\dots,a_n)=\sum_{\pi\in \langle 1_n\rangle} t_{\pi}(a_1,\dots,a_n).
\end{equation}
\end{prop}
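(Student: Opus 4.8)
The plan is to compare the two expansions of the moment $\varphi(a_1\cdots a_n)$ that are available to us: the free-cumulant expansion $\varphi(a_1\cdots a_n)=\sum_{\pi\in NC(n)}\kappa_\pi(a_1,\dots,a_n)$ and the $t$-coefficient expansion $\varphi(a_1\cdots a_n)=\sum_{\sigma\in NCL(n)}t_\sigma(a_1,\dots,a_n)$, and to show that regrouping the latter according to connectivity reproduces the former term by term. First I would observe that the connectivity relation $\sim_\sigma$ of a non-crossing linked partition $\sigma$ sorts $[n]$ into connected components, and that these components form a genuine non-crossing partition $\widehat{\sigma}\in NC(n)$. This produces a map $\sigma\mapsto\widehat{\sigma}$ from $NCL(n)$ to $NC(n)$ whose fibers are precisely the sets $\langle\pi\rangle$, since for $\pi\in NC(n)$ the relation $\sim_\pi$ is just "lies in the same block," so $\sigma\in\langle\pi\rangle$ if and only if $\widehat{\sigma}=\pi$. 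In particular $NCL(n)$ is the disjoint union of the sets $\langle\pi\rangle$ over $\pi\in NC(n)$, and $\langle 1_n\rangle$ is exactly the set of $\sigma$ whose elements are all mutually connected, where $1_n$ denotes the one-block partition of $[n]$.

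The central step is a factorization of the weight $t_\sigma$ along the connected components of $\sigma$. Since every block $V\in\sigma$ is contained in a single component and the set $s(\sigma)$ of non-minimal elements also splits along components, the defining product for $t_\sigma$ separates as $t_\sigma(a_1,\dots,a_n)=\prod_{B\in\widehat{\sigma}}t_{\sigma|_B}\big((a_1,\dots,a_n)|_B\big)$, where $\sigma|_B$ is the restriction of $\sigma$ to the component $B$, which lies in $\langle 1_B\rangle$. Moreover, prescribing a $\sigma\in\langle\pi\rangle$ is the same as choosing independently, for each block $B$ of $\pi$, a connected linked partition of $B$; this sets up a bijection $\langle\pi\rangle\cong\prod_{B\in\pi}\langle 1_B\rangle$. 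Summing over this fiber therefore factors as $\sum_{\sigma\in\langle\pi\rangle}t_\sigma(a_1,\dots,a_n)=\prod_{B\in\pi}c_{|B|}\big((a_1,\dots,a_n)|_B\big)=c_\pi(a_1,\dots,a_n)$, where I set $c_m(a_1,\dots,a_m):=\sum_{\tau\in\langle 1_m\rangle}t_\tau(a_1,\dots,a_m)$ and extend it multiplicatively over blocks just as $\kappa_\pi$ is built from $\kappa_m$.

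Combining the two observations gives $\varphi(a_1\cdots a_n)=\sum_{\pi\in NC(n)}c_\pi(a_1,\dots,a_n)$, which is exactly the same moment-to-multiplicative-family relation satisfied by the free cumulants $\kappa_\pi$. I would conclude by the standard uniqueness argument, inducting on $n$: for $n=1$ both $c_1$ and $\kappa_1$ equal $\varphi$, and in the relation the top term $\pi=1_n$ contributes $c_n$ (respectively $\kappa_n$) while every other term is a product of factors $c_{|B|}$ (respectively $\kappa_{|B|}$) with $|B|<n$, which agree by the inductive hypothesis. Subtracting the two expansions of the same moment then forces $c_n=\kappa_n$, which is the asserted identity $\kappa_n(a_1,\dots,a_n)=\sum_{\sigma\in\langle 1_n\rangle}t_\sigma(a_1,\dots,a_n)$.

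I expect the main obstacle to be the two combinatorial facts underpinning the factorization: that the connected components of a non-crossing linked partition are themselves non-crossing, so that $\sigma\mapsto\widehat{\sigma}$ genuinely lands in $NC(n)$; and that restricting $\sigma$ to a component preserves both its block structure and its non-minimal set, so that $s(\sigma|_B)=s(\sigma)\cap B$ and no minimal-element is created or destroyed by the restriction. Verifying these requires careful use of the defining constraint $|V_r\cap V_s|\le 1$ of $NCL(n)$ together with the rule that a shared point is minimal in only one of the two blocks it joins, and also checking that reassembling connected linked structures inside the blocks of a fixed $\pi\in NC(n)$ preserves non-crossingness globally. Once this bookkeeping is in place, the factorization of $t_\sigma$ and the bijection $\langle\pi\rangle\cong\prod_{B\in\pi}\langle 1_B\rangle$ are immediate, and the remaining uniqueness step is purely formal.
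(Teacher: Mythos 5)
Your proof is correct and is essentially the standard argument: the paper states Proposition \ref{prop: t-coefficient} without proof, as a review of Popa's result \cite{POP-linked-08}, and the argument there runs along exactly your lines --- group the $NCL(n)$ moment expansion according to the non-crossing partition of connected components, factor $t_\sigma$ over the components, and identify $\sum_{\sigma\in\langle 1_n\rangle}t_\sigma$ with $\kappa_n$ via uniqueness in the moment--cumulant relations. The two combinatorial facts you flag (that the connected components of a $\sigma\in NCL(n)$ form a non-crossing partition, so $NCL(n)=\bigsqcup_{\pi\in NC(n)}\langle\pi\rangle$, and that restriction/reassembly yields the bijection $\langle\pi\rangle\cong\prod_{B\in\pi}\langle 1_{|B|}\rangle$) are indeed the only nontrivial points, and both are standard, going back to Dykema \cite{Dy07}.
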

\begin{prop}\label{Prop: vanishing of mixed t-coeff}
Unital subalgebras $\{\mathcal{A}_i\}_{i\in I}$ are free if and only if for all $a_k\in \mathcal{A}_{i_k}$ with $\varphi(a_k)=1$ for all $k$, and $i_1,\dots,i_n\in I$, we have
$$
t_{n-1}(a_1,\dots,a_n)=0
$$
whenever there are $r,s\in [n]$ with $i_r\neq i_s$. 
\end{prop}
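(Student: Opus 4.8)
The plan is to reduce the statement to the classical characterization of freeness by the vanishing of mixed free cumulants, and then transport that characterization across the identity of Proposition \ref{prop: t-coefficient}, $\kappa_n(a_1,\dots,a_n)=\sum_{\sigma\in\langle 1_n\rangle}t_{\sigma}(a_1,\dots,a_n)$. Recall the standard fact (Speicher) that $\{\mathcal{A}_i\}_{i\in I}$ are free if and only if $\kappa_n(a_1,\dots,a_n)=0$ for every $n\geq 2$ and every mixed tuple $a_j\in\mathcal{A}_{i_j}$; by multilinearity and the vanishing of cumulants that contain the unit, this holds for arbitrary such tuples exactly when it holds for those normalized by $\varphi(a_j)=1$. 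So the task becomes a transfer between vanishing of mixed normalized $t$-coefficients and vanishing of mixed normalized cumulants.

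The combinatorial engine is the following observation about $\langle 1_n\rangle$: every $\sigma\in\langle 1_n\rangle$ is connected, so if the entries $a_1,\dots,a_n$ are mixed (not all drawn from a single $\mathcal{A}_i$), then $\sigma$ must contain a block $V$ with $|V|\geq 2$ whose entries $\{a_i : i\in V\}$ are themselves mixed. Indeed, were every block monochromatic, any two blocks sharing a point would carry the same index, and connectivity of $\sigma$ would propagate a single index throughout $[n]$, contradicting mixedness. I expect this lemma, together with the bookkeeping of blocks equal to the full set $[n]$, to be the heart of the argument.

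For the $(\Leftarrow)$ direction I would argue directly. Assume all mixed normalized $t$-coefficients vanish and fix a mixed normalized tuple. Since $\varphi(a_j)=1$ we have $t_0(a_j)=1$, so in $t_{\sigma}=\prod_{V}t_{|V|-1}((a_i)_{i\in V})\prod_{k\in s(\sigma)}t_0(a_k)$ only the block factors survive. By the lemma each $\sigma\in\langle 1_n\rangle$ has a mixed block $V$, and $t_{|V|-1}((a_i)_{i\in V})$ is then a mixed normalized $t$-coefficient, hence $0$; thus $t_{\sigma}=0$ for every $\sigma$, and Proposition \ref{prop: t-coefficient} gives $\kappa_n=0$ on all mixed normalized tuples. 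I would then extend this to arbitrary mixed tuples by multilinearity, rescaling $a_j=\varphi(a_j)\cdot\big(a_j/\varphi(a_j)\big)$ when $\varphi(a_j)\neq 0$ and writing $a_j=(1+a_j)-1$ (a difference of two normalized elements of $\mathcal{A}_{i_j}$) when $\varphi(a_j)=0$, so that all mixed cumulants vanish, i.e. freeness holds.

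For the $(\Rightarrow)$ direction I would induct on $n$. Assuming freeness, the left-hand side $\kappa_n(a_1,\dots,a_n)$ of Proposition \ref{prop: t-coefficient} vanishes on mixed normalized tuples for $n\geq 2$. On the right-hand side I would separate the $\sigma$ that contain the full block $[n]$ from those that do not. A block equal to $[n]$ can only be accompanied by singleton blocks, since any other block would have to meet $[n]$ in at most one point, so each such $\sigma$ contributes exactly $t_{n-1}(a_1,\dots,a_n)$ after normalization; there are finitely many of them and at least $\sigma=1_n$, so together they yield $c\,t_{n-1}(a_1,\dots,a_n)$ with an integer $c\geq 1$. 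Every remaining $\sigma$ has all blocks of size $<n$, and by the lemma one of them is mixed, so that factor vanishes by the induction hypothesis and $t_{\sigma}=0$. Hence $0=\kappa_n=c\,t_{n-1}(a_1,\dots,a_n)$ forces $t_{n-1}=0$, completing the induction. The main obstacle I anticipate is exactly the verification that the full-block configurations exhaust the ``$t_{n-1}$-carrying'' terms (so that the coefficient $c$ is nonzero and the equation can be solved) and that all other connected linked partitions split into strictly smaller mixed blocks; this is where the non-crossing linked structure and the minimal-element convention must be used with care.
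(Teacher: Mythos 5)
Your argument is sound, but there is nothing in the paper to compare it against: Proposition \ref{Prop: vanishing of mixed t-coeff} is stated as a review of a result of Popa from \cite{POP-linked-08}, and the paper itself contains no proof. Judged on its own, your write-up is correct and takes the natural route: transport Speicher's mixed-cumulant characterization of freeness across the identity $\kappa_n=\sum_{\sigma\in\langle 1_n\rangle}t_\sigma$ of Proposition \ref{prop: t-coefficient}. Your connectivity lemma --- a connected $\sigma\in\langle 1_n\rangle$ evaluated on a mixed tuple must contain a mixed block of size at least $2$, by the monochromatic-propagation argument along chains of intersecting blocks --- is exactly the right engine, and the normalization bookkeeping ($a_j=\varphi(a_j)\cdot(a_j/\varphi(a_j))$ when $\varphi(a_j)\neq 0$, $a_j=(1+a_j)-1$ otherwise, combined with $\kappa_n(\dots,1,\dots)=0$ for $n\geq 2$) correctly bridges normalized and arbitrary mixed tuples in both directions.

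One point deserves sharpening, and it is precisely the one you flagged as the main obstacle. In Dykema's definition of $NCL(n)$ --- the one actually in force here: two distinct blocks that intersect must each have at least two elements, their common point being the minimum of exactly one of them --- a singleton block cannot meet any other block. Hence a block equal to $[n]$ admits no companion blocks whatsoever: any other block $W$ would satisfy $|W|=|W\cap[n]|\leq 1$, and a singleton cannot overlap $[n]$. So the unique $t_{n-1}$-carrying element of $\langle 1_n\rangle$ is $1_n$ itself, and your constant is $c=1$. The paper's wording of the definition, read literally, would also admit $\{[n]\}\cup\{\{j\}\}$ for $j\geq 2$, giving $c=2^{n-1}$; but that permissive reading is inconsistent with the rest of the paper --- already for $n=2$ it would force $\kappa_2(a_1,a_2)=2t_1(a_1,a_2)t_0(a_2)$, contradicting \eqref{eqn: single-variable - K vs t} and the computation $t_1(a)=\kappa_2(a)$ used in the proof of Proposition \ref{prop: inf Wishart t-coeff} --- so the restrictive convention is the intended one. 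Since your induction only ever uses $c\geq 1$, your proof is valid under either reading; with the correct convention it simplifies to $0=\kappa_n=t_{n-1}(a_1,\dots,a_n)+\sum_{\sigma\neq 1_n}t_\sigma$ with every remaining $t_\sigma$ killed by the induction hypothesis. I consider the argument complete, and presumably close in spirit to Popa's original proof.
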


Considering the single variable case. Let's simplify our notations as follows. For a given $a\in\hat{\mathcal{A}}$, we write $r_n(a)=r_n(a,\dots,a)$ and $t_n(a)=t_n(a,\cdots,a).$ Note that for any $\pi\in \langle 1_n\rangle$, $|s(\pi)|=n-\#(\pi)$ where $\#(\pi)$ is the number of blocks in $\pi$. Combining Proposition \ref{prop: t-coefficient} and \cite[Lemma 6.2]{MN10}, for all $n\geq 2$, we have the following result:
\begin{equation}\label{eqn: single-variable - K vs t}
r_n(a)= \sum_{\pi\in NC(n-1)}\Big(\prod_{V\in \pi}t_{|V|}(a)\Big)t_0(a)^{n-\#(\pi)}.
\end{equation}
In addition, following \cite[Remark 1.3]{POP-linked-08}, we note that for a given $a\in \hat{\mathcal{A}}$ and $c\in\mathbb{C}$, we have
$t_n(ca)=ct_n(a)$ for all $n\geq 0$, and hence, $T_{ca}(z) = cT_a(z).$  

\section{OVI Multiplicative Convolutions}

In this section, we introduce several transforms which play key role to describe the (operator-valued ) infinitesimal multiplicative convolutions for free, Boolean, and monotone respectively. We will apply Proposition \ref{UpperProp} to reduce our problems to the operator-valued case, and then we can deduce our results by using formulas \eqref{Sconvolution}, \eqref{Bconvolution}, and \eqref{Kconvolution}. 

Suppose that $(\mathcal{A},\mathcal{B},E,E')$ is an OVI probability space. Let $x=x^*$ be an element in $\mathcal{A}$, then we rcall that the moment generating function $\psi_x$ is defined by 
$\psi_x(b)=E((1-bx)^{-1}-1) \text{ for all }\|b\|<\|x\|^{-1}.$  

If we consider $X=\begin{bmatrix}
x & 0 \\ 0 & x 
\end{bmatrix}$, then the moment generating function $\Psi_X$ of $X$ can be defined similarly. Moreover, it can be expressed as
$$
\Psi_{X}\left(\begin{bmatrix}
b & c \\
0 & b
\end{bmatrix}\right) = \begin{bmatrix}
\psi_x(b) & \psi_x'(b)(c)+\partial \psi_x(b) \\
0 & \psi_x(b)
\end{bmatrix}
$$
for any $b,c\in\mathcal{B}$ with $\left\|\begin{bmatrix}
b & c \\ 0 & b
\end{bmatrix}\right\|:=\|b\|+\|c\|<\|x\|^{-1}$, 
where $\psi'_x$ represents the Fr{\'e}chet derivative of $\psi_x$ and $\partial\psi_x$ is the infinitesimal moment generating function of $x$, defined as \eqref{defn:inf_moment_function} (see \cite[Section 4.3]{PL19}).

\subsection{Free case}

Suppose $(\mathcal{A},\mathcal{B},E,E')$ is an OVI probability space. We let $x=x^*\in \mathcal{A}$ and further assume that $E(x)$ is invertible. Then let $X=\begin{bmatrix}
    x & 0 \\ 0 & x
\end{bmatrix}$, we consider the $T$-transform of $X$ as follows 
\begin{eqnarray*}
    &&T_X\left(\begin{bmatrix}
        w & v \\ 0 & w
    \end{bmatrix}\right) \\
    &=& S_X\left(\begin{bmatrix}
        w & v \\ 0 & w
    \end{bmatrix}\right)^{-1} = \begin{bmatrix}
        S_x(w) & S_x'(w)(v)+\partial S_x(w) \\
0 & S_x(w)
    \end{bmatrix}^{-1} \\ 
&=& \begin{bmatrix}
    S_x(w)^{-1} & -S_x(w)^{-1} \big(S_x'(w)(v)+\partial S_x(w)\big) S_x(w)^{-1} \\ 0 & S_x(w)^{-1}
\end{bmatrix} \\
&=& \begin{bmatrix}
    T_x(w) & \partial T_x(w)-S_x(w)^{-1}S_x'(w)(v)S_x(w)^{-1} \\ 0 & T_x(w)
\end{bmatrix}
= \begin{bmatrix}
    S_x(w)^{-1} & \partial T_x(w)+T_x'(w)(v) \\ 0 & S_x(w)^{-1}
\end{bmatrix}
\end{eqnarray*}
where 
\begin{equation}\label{formula: OVI-T-transform}
\partial T_x(w)=-T_x(w) \partial S_x(w) T_x(w).
\end{equation}
\begin{defn}\label{defn:inf-T=transform}
The \eqref{formula: OVI-T-transform} is called the \emph{infinitesimal $T$-transform }of $x$. 
\end{defn}
\begin{thm}\label{OVITfcon}
Suppose that $(\mathcal{A},\mathcal{B},E,E')$ is an OVI probability space. Let $x=x^*$ and $y=y^*$ be two infinitesimally freely independent random variables in $\mathcal{A}$ such that $E(x)$ and $E(y)$ are invertible. Then for $\|w\|$ small enough, we have
\begin{eqnarray*}
\partial T_{xy}(w) 
&=& T_x\Big(T_y(w)wT_y(w)^{-1}\Big)\partial T_y(w)+ T_x'(T_y(w)wT_y(w)^{-1})(C)T_y(w)\\ 
&&+\partial T_x\Big(T_y(w)wT_y(w)^{-1}\Big)T_y(w)
\end{eqnarray*}
where 
$$
C=-T_y(w)wT_y(w)^{-1}\partial T_y(w)T_y(w)^{-1}+\partial T_y(w)wT_y(w)^{-1}.
$$
\end{thm}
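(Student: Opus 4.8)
The plan is to exploit the same reduction strategy advertised in the section introduction and already exemplified by the derivation of the infinitesimal $T$-transform formula \eqref{formula: OVI-T-transform}. Namely, I would lift the self-adjoint elements $x$ and $y$ to the upper triangular probability space by setting $X=\begin{bmatrix} x & 0 \\ 0 & x\end{bmatrix}$ and $Y=\begin{bmatrix} y & 0 \\ 0 & y\end{bmatrix}$ in $\widetilde{\mathcal{A}}$. By Proposition \ref{UpperProp}, the infinitesimal freeness of $x$ and $y$ in $(\mathcal{A},\mathcal{B},E,E')$ is equivalent to the ordinary $\widetilde{\mathcal{B}}$-freeness of $\widetilde{\mathcal{A}}_x$ and $\widetilde{\mathcal{A}}_y$ in the operator-valued Banach probability space $(\widetilde{\mathcal{A}},\widetilde{\mathcal{B}},\widetilde{E})$. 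Since $E(x)$ and $E(y)$ are invertible, the corresponding $\widetilde{E}(X)$ and $\widetilde{E}(Y)$ are invertible in $\widetilde{\mathcal{B}}$, so the operator-valued $T$-transform product formula \eqref{Tconvolution} applies verbatim to $X$ and $Y$, giving
$$
T_{XY}(W)=T_X\bigl(T_Y(W)\,W\,T_Y(W)^{-1}\bigr)\,T_Y(W),
$$
for $W=\begin{bmatrix} w & v \\ 0 & w\end{bmatrix}$ with $\|W\|$ small.

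Next I would read off both the $(1,1)$ and $(1,2)$ entries of this matrix identity. The computation displayed just before the theorem shows that for any self-adjoint $z$ with $E(z)$ invertible one has
$$
T_Z(W)=\begin{bmatrix} T_z(w) & \partial T_z(w)+T_z'(w)(v) \\ 0 & T_z(w)\end{bmatrix}.
$$
The strategy is therefore purely bookkeeping: substitute this $2\times 2$ form for each of $T_{XY}$, $T_X$, and $T_Y$ into the product formula, carry out the matrix multiplications and the single matrix inversion $T_Y(W)^{-1}$, and then match the off-diagonal entries. The diagonal entries will automatically reproduce \eqref{Tconvolution} for $T_{xy}$, serving as a consistency check, while the $(1,2)$ entry will encode the sought infinitesimal relation. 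The key technical points are (i) the inverse of an upper triangular block matrix $\begin{bmatrix} a & a' \\ 0 & a\end{bmatrix}$ is $\begin{bmatrix} a^{-1} & -a^{-1}a'a^{-1} \\ 0 & a^{-1}\end{bmatrix}$, which I would use to expand $T_Y(W)^{-1}$; and (ii) the argument $T_Y(W)\,W\,T_Y(W)^{-1}$ fed into $T_X$ is itself an upper triangular element whose $(1,1)$ entry is $T_y(w)wT_y(w)^{-1}$ and whose $(1,2)$ entry is precisely the quantity $C$ appearing in the statement (plus the $v$-linear piece $T_y'(w)(v)wT_y(w)^{-1}+\cdots$ that pairs with $T_x'$). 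Tracking this off-diagonal entry of the argument and then applying the Fréchet derivative $T_X$ via the chain rule, the $(1,2)$ entry produces the three terms: $T_x(\cdot)\,\partial T_y(w)$, the derivative term $T_x'(\cdot)(C)\,T_y(w)$, and $\partial T_x(\cdot)\,T_y(w)$.

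The main obstacle I anticipate is the careful identification of the off-diagonal entry $C$. When $T_X$ is evaluated at the upper triangular argument $A=\begin{bmatrix} \alpha & \gamma \\ 0 & \alpha\end{bmatrix}$ with $\alpha=T_y(w)wT_y(w)^{-1}$, the resulting $(1,2)$ entry splits into the honest infinitesimal part $\partial T_x(\alpha)$ and the Fréchet-linear part $T_x'(\alpha)(\gamma)$, and one must verify that the $\gamma$ here is exactly $C$ after discarding the separate $v$-dependent contributions that come from differentiating $T_y(w)$ in the $v$ direction. Concretely this requires expanding
$$
T_Y(W)\,W\,T_Y(W)^{-1}
$$
to first order in the nilpotent off-diagonal direction and collecting the coefficient of the $(1,2)$ slot, where the two summands of $C$ arise respectively from differentiating $T_Y(W)^{-1}$ and from differentiating the middle factor $W$. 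Once $C$ is correctly isolated, the remaining chain-rule and product-rule manipulations are routine, and setting $v=0$ at the end (equivalently, comparing only the infinitesimal, not the Fréchet-directional, off-diagonal data) yields the stated formula. I would close by remarking that the diagonal consistency check against \eqref{Tconvolution} guarantees no bookkeeping error has crept in.
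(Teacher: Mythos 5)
Your proposal is correct and follows essentially the same route as the paper: lift to the upper triangular space via Proposition \ref{UpperProp}, apply \eqref{Tconvolution} to $X$ and $Y$, compute the conjugation $T_Y(W)\,W\,T_Y(W)^{-1}$ to isolate $C$ in the $(1,2)$-slot, and compare $(1,2)$-entries. The only cosmetic difference is that you carry a general off-diagonal direction $v$ and discard it at the end, whereas the paper evaluates at $W=\begin{bmatrix} w & 0 \\ 0 & w\end{bmatrix}$ from the start so that the $(1,2)$-entry is directly $\partial T_{xy}(w)$.
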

\begin{proof}
Following Proposition \ref{UpperProp}, $X=\begin{bmatrix}
    x & 0 \\ 0 & x
\end{bmatrix}$ and $Y=\begin{bmatrix}
    y & 0 \\ 0 & y
\end{bmatrix}$ are free. Then for all $\|w\|$ small, we have
\begin{eqnarray*}
&&T_{XY}\left(\begin{bmatrix}
    w & 0 \\ 0 & w
\end{bmatrix}\right) \\
&=& T_X\left(\begin{bmatrix} 
    T_y(w) & \partial T_y(w) \\ 0 & T_y(w)
\end{bmatrix}\begin{bmatrix}
    w & 0 \\ 0 & w
\end{bmatrix}\begin{bmatrix}
    T_y(w)^{-1} & -T_y(w)^{-1}\partial T_y(w)T_y(w)^{-1} \\ 0 & T_y(w)^{-1}
\end{bmatrix}\right)\begin{bmatrix}
    T_y(w) & \partial T_y(w) \\ 0 & T_y(w)\end{bmatrix} 
\end{eqnarray*}
via \eqref{Tconvolution}. We note that the $(1,2)$-entry of $T_{XY}\left(\begin{bmatrix}
    w & 0 \\ 0 & w
\end{bmatrix}\right) $ is $\partial T_{xy}(w)$; thus, we shall compute the $(1,2)$-entry of the right hand side of the above equation. 
Note that 
\begin{eqnarray*}
&&\begin{bmatrix}
    T_y(w) & \partial T_y(w) \\ 0 & T_y(w)
\end{bmatrix}\begin{bmatrix}
    w & 0 \\ 0 & w
\end{bmatrix}\begin{bmatrix}
    T_y(w)^{-1} & -T_y(w)^{-1}\partial T_y(w)T_y(w)^{-1} \\ 0 & T_y(w)^{-1}
\end{bmatrix} \\
&=& \begin{bmatrix}
    T_y(w)wT_y(w)^{-1} & -T_y(w)wT_y(w)^{-1}\partial T_y(w)T_y(w)^{-1}+\partial T_y(w)wT_y(w)^{-1} \\
    0 & T_y(w)wT_y(w)^{-1}\end{bmatrix} \\
    &=&\begin{bmatrix}
    T_y(w)wT_y(w)^{-1} & C \\ 0 & T_y(w)wT_y(w)^{-1}
    \end{bmatrix}
\end{eqnarray*}
where $C= -T_y(w)wT_y(w)^{-1}\partial T_y(w)T_y(w)^{-1}+\partial T_y(w)wT_y(w)^{-1}$. 

Therefore,
\begin{eqnarray*}
&&T_X\left(\begin{bmatrix}
    T_y(w)wT_y(w)^{-1} & C \\ 0 & T_y(w)wT_y(w)^{-1}
    \end{bmatrix}\right)\begin{bmatrix}
    T_y(w) & \partial T_y(w) \\ 0 & T_y(w)\end{bmatrix} \\
    &=& \begin{bmatrix}
        T_x\Big(T_y(w)^{-1}wT_y(w)\Big) & T_x'\Big(T_y(w)^{-1}wT_y(w)\Big)(C)+\partial T_x\Big(T_y(w)^{-1}wT_y(w)\Big) \\ 0 & T_x\Big(T_y(w)^{-1}wT_y(w)\Big)
    \end{bmatrix}\begin{bmatrix}
    T_y(w) & \partial T_y(w) \\ 0 & T_y(w)\end{bmatrix}.
\end{eqnarray*}
Thus, by computing the $(1,2)$-entry of the last equation, we complete the proof. 
\end{proof}
\begin{rem}
We note that Theorem \ref{OVITfcon} can be also proved by combining Theorem \ref{Sconvolution} and applying 
$$
\partial S_x(w)=-T_x(w)^{-1}\partial T_x(w) T_x(w)^{-1}.
$$
In addition, when $\mathcal{B}=\mathbb{C}$, the infinitesimal $T$-transform of a given element $x$ is 
$$
\partial T_x(w)=\frac{-\partial S_x(w)}{S_x(w)^2} = -T_x(w)\partial S_x(w) T_x(w) \text{ for }|w| \text{ small. }
$$
Moreover, if $x=x^*$ and $y=y^*$ are infinitesimally free, then 
\begin{equation}\label{eqn: scalar-version t-convolution}
\partial T_{xy}(w)=\partial T_x(w) T_y(w)+T_x(w) \partial T_y(w)
\text{ for } |w| \text{ small. }\end{equation}
\end{rem}
\subsection{Boolean Case}
Suppose that $(\mathcal{A},\mathcal{B},E,E')$ is an OVI probability space. Let $x=x^*$ be an element in $\mathcal{A}$, and then we also let $
X=\begin{bmatrix}
x & 0 \\ 0 & x 
\end{bmatrix}.$ Let us compute the $B$-transform of $X$ as the following. 
\begin{lem}\label{B_X formula}
For $b$ and $c\in\mathcal{B}$ small enough, we have 
$$
\eta_x\left(\begin{bmatrix}
b & c \\
0 & b
\end{bmatrix}\right)=\begin{bmatrix}
\eta_x(b) & \eta_x'(b)(c)+\partial \eta_x(b)\\
0 & \eta_x(b)
\end{bmatrix}
$$
where 
$$
\partial \eta_x(b)=(\psi_x(b)+1)^{-1}\partial \psi_x(b)(\psi_x(b)+1)^{-1}.
$$
\end{lem}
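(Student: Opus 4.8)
The plan is to push the statement into the upper triangular probability space $(\widetilde{\mathcal{A}},\widetilde{\mathcal{B}},\widetilde{E})$ and to read off the two entries of the left-hand side, where $X=\begin{bmatrix} x & 0\\ 0 & x\end{bmatrix}$ and $B=\begin{bmatrix} b & c\\ 0 & b\end{bmatrix}$. Everything needed is already in place: the formula for $\Psi_X$ recorded just before the Free case gives
$$
\Psi_X(B)=\begin{bmatrix}\psi_x(b) & \psi_x'(b)(c)+\partial\psi_x(b)\\ 0 & \psi_x(b)\end{bmatrix},
$$
and $\eta$ is manufactured from $\psi$ by the purely algebraic recipe $\eta=\widetilde{\psi}\,(\psi+1)^{-1}$. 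Applying that same recipe to $X$ and $B$ produces an upper triangular matrix whose diagonal is $\eta_x(b)$ and whose $(1,2)$-entry, once the part linear in $c$ is separated from the part carrying $\partial\psi_x(b)$, is exactly $\eta_x'(b)(c)+\partial\eta_x(b)$. Matching the $\partial\psi_x(b)$-part then identifies $\partial\eta_x(b)$, and this is what I would do.

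Concretely, I would first record the upper triangular inverses via the general rule $\begin{bmatrix} d & e\\ 0 & d\end{bmatrix}^{-1}=\begin{bmatrix} d^{-1} & -d^{-1}ed^{-1}\\ 0 & d^{-1}\end{bmatrix}$. This yields $B^{-1}$, hence $\widetilde{\psi}_X(B)=B^{-1}\Psi_X(B)$, and also $(\Psi_X(B)+1)^{-1}$. Multiplying $\widetilde{\psi}_X(B)$ by $(\Psi_X(B)+1)^{-1}$ and reading off the $(1,2)$-entry gives a sum of two terms. As a consistency check, the terms proportional to $c$ should reassemble, through the product rule, into the Fréchet derivative $\eta_x'(b)(c)$; the remaining, $c$-free terms are then the ones that must collapse to the claimed $\partial\eta_x(b)$.

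The single computation that does the real work is the cancellation
$$
1-\psi_x(b)\,(\psi_x(b)+1)^{-1}=(\psi_x(b)+1)^{-1},
$$
which combines the two $\partial\psi_x(b)$-contributions into the symmetric expression $(\psi_x(b)+1)^{-1}\,\partial\psi_x(b)\,(\psi_x(b)+1)^{-1}$. Conceptually this is nothing but the Fréchet derivative of the map $\psi\mapsto\psi(1+\psi)^{-1}$ evaluated at $\psi_x(b)$ in the direction $\partial\psi_x(b)$, the same identity showing that this derivative equals $(1+\psi)^{-1}(\cdot)(1+\psi)^{-1}$; equivalently, one may regard $E_\epsilon=E+\epsilon E'$ as a one-parameter family, form the corresponding $\eta^{(\epsilon)}_x$, and differentiate at $\epsilon=0$, which routes the argument through precisely this derivative.

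The main obstacle is the non-commutative bookkeeping rather than any conceptual difficulty: in the operator-valued setting $\psi_x(b)$, $(\psi_x(b)+1)^{-1}$ and $\partial\psi_x(b)$ need not commute, so the left/right order of every factor has to be tracked when multiplying the two upper triangular matrices and when invoking the cancellation identity, and the prefactor carried by $\widetilde{\psi}$ must be placed correctly. A minor technical point worth recording is that this prefactor is harmless near the origin: since $\psi_x(b)=bE(x)+O(b^2)$ and $\partial\psi_x(b)=bE'(x)+O(b^2)$, the relevant expressions extend analytically through $b=0$, so the resulting formula for $\partial\eta_x(b)$ is valid for all sufficiently small $b$, as asserted.
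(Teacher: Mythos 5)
Your overall route coincides with the paper's proof: pass to the upper triangular space, use the recorded formula for $\Psi_X(B)$, invert via the rule $\begin{bmatrix} d & e\\ 0 & d\end{bmatrix}^{-1}=\begin{bmatrix} d^{-1} & -d^{-1}ed^{-1}\\ 0 & d^{-1}\end{bmatrix}$, multiply, read off the $(1,2)$-entry, separate the $c$-linear part (which the product rule reassembles into $\eta_x'(b)(c)$), and collapse the $c$-free part with the cancellation $1-\psi_x(b)(\psi_x(b)+1)^{-1}=(\psi_x(b)+1)^{-1}$. All of that matches the paper step for step.

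The one place you diverge is the $\widetilde{\psi}$ prefactor, and there your claim that it is ``harmless'' is wrong. If you carry $B^{-1}=\begin{bmatrix} b^{-1} & -b^{-1}cb^{-1}\\ 0 & b^{-1}\end{bmatrix}$ through the computation, the $c$-free part of the $(1,2)$-entry comes out as $b^{-1}(\psi_x(b)+1)^{-1}\partial\psi_x(b)(\psi_x(b)+1)^{-1}$: the left factor $b^{-1}$ correctly merges the $c$-terms into the Fr\'echet derivative of the twisted $\eta_x$, but it does \emph{not} cancel from the $\partial\psi_x$-term, and since $b^{-1}$ need not commute with $(\psi_x(b)+1)^{-1}$ this is not the displayed $\partial\eta_x(b)$ --- even in the scalar case the two expressions differ by a factor $1/z$, so this is a genuine change of the formula, not an ordering nuisance. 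Your analyticity remark shows the expression extends through $b=0$, i.e.\ it is well defined for small $b$, but it does nothing to identify the two formulas. The paper's own proof computes $\Psi_X(B)\left(\Psi_X(B)+I\right)^{-1}$ with no $B^{-1}$ at all, i.e.\ it works with the untwisted recipe $\eta=\psi(\psi+1)^{-1}$, and that is the recipe to which the stated $\partial\eta_x(b)=(\psi_x(b)+1)^{-1}\partial\psi_x(b)(\psi_x(b)+1)^{-1}$ actually corresponds. So to land on the lemma as stated you must drop the prefactor as the paper does; if instead you keep the preliminaries' twisted definition $\eta_x=\widetilde{\psi}_x(\psi_x+1)^{-1}$, the corner acquires the extra left factor $b^{-1}$ --- a tension between the paper's definition and its lemma that your write-up glosses over rather than resolves.
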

\begin{proof}
Observe that
\begin{eqnarray*}
&&\left(\Psi_X\left(\begin{bmatrix}
b & c \\ 0 & b
\end{bmatrix}\right)+I\right)^{-1}\\
&=&  \left(\begin{bmatrix}
\psi_x(b) & \psi_x'(b)(c)+\partial \psi_x(b) \\
0 & \psi_x(b)
\end{bmatrix}+\begin{bmatrix}
1 & 0 \\
0 & 1
\end{bmatrix}\right)^{-1} \\
&=& \begin{bmatrix}
\psi_x(b)+1 & \psi_x'(b)(c)+\partial \psi_x(b) \\
0 & \psi_x(b)+1
\end{bmatrix} ^{-1} \\
&=& \begin{bmatrix}
(\psi_x(b)+1)^{-1} & -(\psi_x(b)+1)^{-1}(\psi_x'(b)(c)+\partial \psi_x(b) )(\psi_x(b)+1)^{-1}\\
0 & (\psi_x(b)+1)^{-1}
\end{bmatrix}.
\end{eqnarray*}
Then we have
\begin{eqnarray*}
\eta_x\left(\begin{bmatrix}
b & c \\
0 & b
\end{bmatrix}\right)&=& \Psi_X\left(\begin{bmatrix}
b & c \\ 0 & b
\end{bmatrix}\right)\left(\Psi_X\left(\begin{bmatrix}
b & c \\ 0 & b
\end{bmatrix}\right)+I\right)^{-1} \\
&=& \begin{bmatrix}
\psi_x(b)(\psi_x(b)+1)^{-1} & C_x(b,c) \\
0 & \psi_x(b)(\psi_x(b)+1)^{-1}
\end{bmatrix}
\end{eqnarray*}
where
\begin{eqnarray*}
C_x(b,c)&=& -\psi_x(b)\left[(\psi_x(b)+1)^{-1}(\psi_x'(b)(c)+\partial \psi_x(b) )(\psi_x(b)+1)^{-1}\right]\\ &&+\left[\psi_x'(b)(c)+\partial \psi_x(b)\right](\psi_x(b)+1)^{-1}.
\end{eqnarray*}
Note that
\begin{eqnarray*}
&&-\psi_x(b)\left[(\psi_x(b)+1)^{-1}(\psi_x'(b)(c)+\partial \psi_x(b) )(\psi_x(b)+1)^{-1}\right] \\ &&+\left[\psi_x'(b)(c)+\partial \psi_x(b)\right](\psi_x(b)+1)^{-1} \\
&=& \psi_x(b)((\psi_x+1)^{-1})'(b)(c)+(\psi_x)'(b)(c)(\psi_x(b)+1)^{-1} \\
&& +\left(1-\psi_x(b)(\psi_x(b)+1)^{-1}\right)\partial \psi_x(b)(\psi_x(b)+1)^{-1} \\
&=& \left(\psi_x(\psi_x+1)^{-1}\right)'(b)(c)+\left(1-\psi_x(b)(\psi_x(b)+1)^{-1}\right)\partial \psi_x(b)(\psi_x(b)+1)^{-1} \\
&=& \eta_x'(b)(c)+\left(1-\psi_x(b)(\psi_x(b)+1)^{-1}\right)\partial \psi_x(b)(\psi_x(b)+1)^{-1} \\
&=& \eta_x'(b)(c)+(\psi_x(b)+1)^{-1}\partial \psi_x(b)(\psi_x(b)+1)^{-1}.
\end{eqnarray*}
\end{proof}
\begin{defn}\label{defn:inf_eta_transform}
The map $\partial \eta_x$ in Lemma \ref{B_X formula} is called the (operator-valued) \emph{infinitesimal $\eta$-transform of x}. 
\end{defn}
\begin{thm}\label{OVIbcon}
Suppose that $(\mathcal{A},\mathcal{B},E,E')$ is an OVI probability space. Let $x=x^*$ and $y=y^*$ be two infinitesimally Boolean independent random variables in $\mathcal{A}$. Then for $b\in \mathcal{B}$ small enough, we have
$$\partial \eta_{(1+x)(1+y)}(b)=\eta_{1+x}(b)\partial \eta_{1+y}(b)+\partial \eta_{1+x}(b)\eta_{1+y}(b).$$
\end{thm}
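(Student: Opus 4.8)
The plan is to mirror the proof of Theorem~\ref{OVITfcon}: transport the problem to the upper triangular space $(\widetilde{\mathcal{A}},\widetilde{\mathcal{B}},\widetilde{E})$, apply the operator-valued Boolean multiplicative formula \eqref{Bconvolution} there, and then read off the $(1,2)$-entry. Concretely, I would set $X=\begin{bmatrix} x & 0 \\ 0 & x\end{bmatrix}$ and $Y=\begin{bmatrix} y & 0 \\ 0 & y\end{bmatrix}$ in $\widetilde{\mathcal{A}}$. By Proposition~\ref{UpperProp}, the infinitesimal Boolean independence of $x$ and $y$ is equivalent to the ordinary Boolean independence of $X$ and $Y$ over $\widetilde{\mathcal{B}}$. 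Since $(\widetilde{\mathcal{A}},\widetilde{\mathcal{B}},\widetilde{E})$ is an operator-valued Banach probability space, formula \eqref{Bconvolution} is available in $\widetilde{\mathcal{A}}$ (cf. the Remark following \eqref{Kconvolution}), and evaluating at the scalar diagonal argument $B=\begin{bmatrix} b & 0 \\ 0 & b\end{bmatrix}$ with $\|b\|$ small gives $\eta_{(1+X)(1+Y)}(B)=\eta_{1+X}(B)\,\eta_{1+Y}(B)$.

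Next I would compute each factor. Because the $(1,2)$-entry of $B$ vanishes, Lemma~\ref{B_X formula} applied to $1+X$ and to $1+Y$ (whose diagonal entries $1+x$ and $1+y$ are self-adjoint) gives $\eta_{1+X}(B)=\begin{bmatrix} \eta_{1+x}(b) & \partial\eta_{1+x}(b) \\ 0 & \eta_{1+x}(b)\end{bmatrix}$ and likewise $\eta_{1+Y}(B)=\begin{bmatrix} \eta_{1+y}(b) & \partial\eta_{1+y}(b) \\ 0 & \eta_{1+y}(b)\end{bmatrix}$, the $\eta'(b)(c)$ terms dropping out since here $c=0$. For the left-hand side I would observe that $X$ and $Y$ are scalar diagonal, so $(1+X)(1+Y)=\begin{bmatrix} z & 0 \\ 0 & z\end{bmatrix}$ with $z=(1+x)(1+y)$, and then argue that the derivation of Lemma~\ref{B_X formula} carries over to this diagonal element to yield $\eta_{(1+X)(1+Y)}(B)=\begin{bmatrix} \eta_{(1+x)(1+y)}(b) & \partial\eta_{(1+x)(1+y)}(b) \\ 0 & \eta_{(1+x)(1+y)}(b)\end{bmatrix}$.

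With all three factors in upper triangular form, the proof finishes by multiplying the two factors on the right and comparing $(1,2)$-entries: the $(1,2)$-entry of $\eta_{1+X}(B)\,\eta_{1+Y}(B)$ is $\eta_{1+x}(b)\partial\eta_{1+y}(b)+\partial\eta_{1+x}(b)\eta_{1+y}(b)$, which must equal the $(1,2)$-entry $\partial\eta_{(1+x)(1+y)}(b)$ on the left, giving the claimed identity.

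The step that needs the most care is the identification of the left-hand factor. Unlike $1+X$ and $1+Y$, the product $Z=(1+X)(1+Y)$ is not literally built from a single self-adjoint element as in Lemma~\ref{B_X formula}, since $z=(1+x)(1+y)$ is generally not self-adjoint. I would therefore verify that the only facts used in Lemma~\ref{B_X formula} are the algebraic form of $\widetilde{E}$ and of the resolvent $(I-BZ)^{-1}=\begin{bmatrix} (1-bz)^{-1} & 0 \\ 0 & (1-bz)^{-1}\end{bmatrix}$; feeding this into $\widetilde{E}$ returns $\Psi_Z(B)=\begin{bmatrix} \psi_z(b) & \partial\psi_z(b) \\ 0 & \psi_z(b)\end{bmatrix}$ with $\partial\psi_z(b)=E'((1-bz)^{-1}-1)$, and the subsequent manipulations of Lemma~\ref{B_X formula} then identify the $(1,2)$-entry of $\eta_Z(B)$ with $\partial\eta_{(1+x)(1+y)}(b)=(\psi_z(b)+1)^{-1}\partial\psi_z(b)(\psi_z(b)+1)^{-1}$, the quantity defined right after that lemma. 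One must also record that all inverses above exist for $\|b\|$ small enough, which is exactly the regime in which \eqref{Bconvolution} holds in $\widetilde{\mathcal{A}}$.
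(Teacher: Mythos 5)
Your proposal is correct and follows essentially the same route as the paper: reduce via Proposition~\ref{UpperProp} to Boolean independence of $X$ and $Y$ in $(\widetilde{\mathcal{A}},\widetilde{\mathcal{B}},\widetilde{E})$, invoke \eqref{Bconvolution} there, compute the factors by Lemma~\ref{B_X formula}, and compare $(1,2)$-entries (the paper keeps a general upper-triangular argument with $c\neq 0$, whose $\eta'(b)(c)$ terms cancel, whereas you evaluate at $c=0$ -- an immaterial simplification). Your extra verification that the Lemma~\ref{B_X formula} computation still applies to $Z=(1+X)(1+Y)$ even though $(1+x)(1+y)$ is not self-adjoint is a point the paper's proof passes over silently, and it is handled correctly.
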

\begin{proof}
By Proposition \ref{UpperProp}, we have $X=\begin{bmatrix}
x & 0 \\ 0 & x
\end{bmatrix}$ and $Y=\begin{bmatrix}
y & 0 \\ 0 & y
\end{bmatrix}$ are Boolean independent. Hence, by \eqref{Bconvolution} we obtain 
\begin{equation}\label{prodBeqn}
\eta_{(1+X)(1+Y)}\left(\begin{bmatrix}
b & c \\
0 & b
\end{bmatrix}\right)= \eta_{1+X}\left(\begin{bmatrix}
b & c \\
0 & b
\end{bmatrix}\right)\eta_{1+Y}\left(\begin{bmatrix}
b & c \\
0 & b
\end{bmatrix}\right).
\end{equation}
The left hand side of \eqref{prodBeqn} is
$$
\begin{bmatrix}
\eta_{(1+x)(1+y)}(b) & \eta_{(1+x)(1+y)}'(b)(c)+\partial \eta_{(1+x)(1+y)}(b)\\
0 & \eta_{(1+x)(1+y)}(b)
\end{bmatrix}.
$$
In addition, the right hand side of \eqref{prodBeqn} is
\begin{eqnarray*}
&&\begin{bmatrix}
\eta_{1+x}(b) & \eta_{1+x}'(b)(c)+\partial \eta_{1+x}(b) \\
0 & \eta_{1+x}(b)
\end{bmatrix} \begin{bmatrix}
\eta_{1+y}(b) & \eta_{1+y}'(b)(c)+\partial \eta_{1+y}(b) \\
0 & \eta_{1+y}(b)
\end{bmatrix} \\
&=& \begin{bmatrix}
\eta_{1+x}(b)\eta_{1+y}(b) & \eta_{1+x}(b)\left(\eta_{1+y}'(b)(c)+\partial \eta_{1+y}(b)\right)+\left(\eta_{1+x}'(b)(c)+\partial \eta_{1+x}(b)\right)\eta_{1+y}(b)\\
0 & \eta_{1+x}(b)\eta_{1+y}(b)
\end{bmatrix}  \\
&=& \begin{bmatrix}
\eta_{1+x}(b)\eta_{1+y}(b) & (\eta_{1+x}\eta_{1+y})'(b)(c)+\left(\eta_{1+x}(b)\partial \eta_{1+y}(b)+\partial \eta_{1+x}(b)\eta_{1+y}(b)\right)\\
0 & \eta_{1+x}(b)\eta_{1+y}(b) &
\end{bmatrix}.
\end{eqnarray*}
Comparing the $(1,2)$-entry on both side of \eqref{prodBeqn}, we obtain the desired result.
\end{proof}
\begin{rem}
When $\mathcal{B}=\mathbb{C}$, we get the formula of scalar version of infinitesimal $B$-transform and the infinitesimal multiplicative Boolean convolution as follows. 

For a given random variable $x\in\mathcal{A}$, the infinitesimal $\eta$-transform of $x$ is given by
$$
\partial \eta_x(z)=\frac{\partial \psi_x(z)}{(z\psi_x(z)+1)^2}\qquad  \text{ for }|z| \text{ small.}
$$
Furthermore, if $x$ and $y$ are infinitesimally Boolean independent, then 
$$
\partial \eta_{(1+x)(1+y)}(z)=\eta_{1+x}(z)\partial \eta_{1+y}(z)+\partial \eta_{1+x}(z)\eta_{1+y}(z).
$$
\end{rem}
\subsection{Monotone Case}
Given an OVI probability space $(\mathcal{A},\mathcal{B},E,E')$ and $x=x^*$ in $\mathcal{A}$. We let $X=\begin{bmatrix}
x & 0 \\ 0 & x
\end{bmatrix}$, and then the following transforms of $X$ are all well-defined in an neighborhood of the origin.   
\begin{eqnarray*}
\vartheta_X\left(\begin{bmatrix}
b & c \\ 0 & b 
\end{bmatrix}\right)&=&\widetilde{E}\left(\left(I-\begin{bmatrix}
b & c \\ 0 & b
\end{bmatrix}\begin{bmatrix}
x & 0 \\ 0 & x 
\end{bmatrix}\right)^{-1}\begin{bmatrix}
b & c \\ 0 & b 
\end{bmatrix}\begin{bmatrix}
x & 0 \\ 0 & x
\end{bmatrix}\right), \\
\kappa_X\left(\begin{bmatrix}
b & c \\ 0 & b
\end{bmatrix}\right)&=&\left(1+\vartheta_X\left(\begin{bmatrix}
b & c \\ 0 & b
\end{bmatrix}\right)\right)^{-1}\vartheta_X\left(\begin{bmatrix}
b & c \\ 0 & b
\end{bmatrix}\right),   \\
\varrho_X\left(\begin{bmatrix}
b & c \\ 0 & b
\end{bmatrix}\right) &=& \widetilde{E}\left(
\begin{bmatrix}
x & 0 \\ 0 & x
\end{bmatrix}
\begin{bmatrix}
b & c \\ 0 & b 
\end{bmatrix}\left(I-\begin{bmatrix}
x & 0 \\ 0 & x
\end{bmatrix}
\begin{bmatrix}
b & c \\ 0 & b 
\end{bmatrix}\right)^{-1}\right), \\
\rho_X\left(\begin{bmatrix}
b & c \\ 0 & b
\end{bmatrix}\right)&=&\varrho_X\left(\begin{bmatrix}
b & c \\ 0 & b
\end{bmatrix}\right)\left(1+\varrho_X\left(\begin{bmatrix}
b & c \\ 0 & b
\end{bmatrix}\right)\right)^{-1}.  
\end{eqnarray*}
First, observe that for $b$ and $c\in\mathcal{B}$ small enough, we have 
\begin{eqnarray*}
&& \vartheta_X\left(\begin{bmatrix}
 b & c \\ 0 & b
 \end{bmatrix}\right) \\
 &=& \widetilde{E}\left(\begin{bmatrix}
 1-bx & -cx \\ 0 & 1-bx
 \end{bmatrix}^{-1}\begin{bmatrix}
 bx & cx \\ 0 & bx
 \end{bmatrix}\right) \\
&=& \widetilde{E} \left(\begin{bmatrix}
(1-bx)^{-1}bx & (1-bx)^{-1}cx[1+(1-bx)^{-1}bx]\\
0 & (1-bx)^{-1}bx
\end{bmatrix}\right) \\
&=& \begin{bmatrix}
E\left((1-bx)^{-1}bx\right) & E\left((1-bx)^{-1}cx[1+(1-bx)^{-1}bx]\right)+E'\left((1-bx)^{-1}bx\right)\\
0 & E\left((1-bx)^{-1}bx\right)
\end{bmatrix} \\
&=& 
\begin{bmatrix}
\vartheta_x(b) & \vartheta_x'(b)(c)+\partial \vartheta_x(b) \\
0 & \vartheta_x(b)
\end{bmatrix}
\end{eqnarray*}
where 
$\partial \vartheta_x(b)=E'\left((1-bx)^{-1}bx\right).$ Note that $ b^{-1}\vartheta_x(b)=\varrho_x(b)b^{-1} $, 
we can also easily obtain 
$$
\varrho_X\left(\begin{bmatrix}
b & c \\ 0 & b
\end{bmatrix}\right)=\begin{bmatrix}
\varrho_x(b) & \varrho_x(b)'(c)+\partial \varrho_x(b) \\
0 & \varrho_x(b)
\end{bmatrix}
$$
where $$\partial \varrho_x(b)=E'(xb(1-xb)^{-1}).$$ 

Now, let us compute the $\kappa$-transform of $X$.  
\begin{lem}\label{kappa_Xformula}
$$
\kappa_X\left(\begin{bmatrix}
b & c \\ 0 & b
\end{bmatrix}\right)=\begin{bmatrix}
\kappa_x(b) & \kappa_x'(b)(c)+\partial \kappa_x(b)\\
0 & \kappa_x(b)
\end{bmatrix}  \qquad \text{ for }\|b\|,\|c\| \text{ small enough,}
$$
where
$$
\partial \kappa_x(b)=(1+\vartheta_x(b))^{-1}\partial \vartheta_x(b)(1+\vartheta_x(b))^{-1}.
$$
\end{lem}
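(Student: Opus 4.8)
The plan is to follow exactly the strategy used for the Boolean $\eta$-transform in Lemma \ref{B_X formula}, now applied to $\kappa_X = (1+\vartheta_X)^{-1}\vartheta_X$. Since the formula for $\vartheta_X$ on the upper-triangular argument has already been computed just above the lemma, I would begin from
$$
\vartheta_X\left(\begin{bmatrix} b & c \\ 0 & b \end{bmatrix}\right) = \begin{bmatrix} \vartheta_x(b) & \vartheta_x'(b)(c)+\partial\vartheta_x(b) \\ 0 & \vartheta_x(b) \end{bmatrix},
$$
abbreviating the off-diagonal entry by $\Theta := \vartheta_x'(b)(c)+\partial\vartheta_x(b)$. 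Adding the identity and inverting the resulting block upper-triangular matrix via the rule $\begin{bmatrix} A & B \\ 0 & A \end{bmatrix}^{-1} = \begin{bmatrix} A^{-1} & -A^{-1}BA^{-1} \\ 0 & A^{-1} \end{bmatrix}$, with $A = 1+\vartheta_x(b)$ and $B = \Theta$, yields an explicit form for $(1+\vartheta_X)^{-1}$.

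Next I would multiply $(1+\vartheta_X)^{-1}\vartheta_X$ entrywise. The two diagonal entries immediately collapse to $(1+\vartheta_x(b))^{-1}\vartheta_x(b) = \kappa_x(b)$, matching the asserted diagonal. The heart of the computation is the $(1,2)$ entry, which after multiplication reads
$$
(1+\vartheta_x(b))^{-1}\Theta - (1+\vartheta_x(b))^{-1}\Theta(1+\vartheta_x(b))^{-1}\vartheta_x(b).
$$
I would simplify this by factoring $(1+\vartheta_x(b))^{-1}\Theta$ on the left and invoking the identity $1-(1+\vartheta_x(b))^{-1}\vartheta_x(b) = (1+\vartheta_x(b))^{-1}$, reducing the $(1,2)$ entry to the symmetric expression $(1+\vartheta_x(b))^{-1}\,\Theta\,(1+\vartheta_x(b))^{-1}$. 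This is the exact analogue of the cancellation $1-\psi_x(b)(\psi_x(b)+1)^{-1} = (\psi_x(b)+1)^{-1}$ already used in Lemma \ref{B_X formula}.

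Finally I would substitute $\Theta = \vartheta_x'(b)(c)+\partial\vartheta_x(b)$ and split the result into the term linear in $c$ and the remaining term. The $\partial\vartheta_x(b)$ piece produces $(1+\vartheta_x(b))^{-1}\partial\vartheta_x(b)(1+\vartheta_x(b))^{-1}$, which is exactly the claimed $\partial\kappa_x(b)$. For the piece linear in $c$, I would separately compute the Fr\'echet derivative of $b \mapsto (1+\vartheta_x(b))^{-1}\vartheta_x(b)$ in the direction $c$, using the product rule together with $\frac{d}{dt}(1+f)^{-1} = -(1+f)^{-1}f'(1+f)^{-1}$; after the same factoring and the identity $1-(1+\vartheta_x(b))^{-1}\vartheta_x(b)=(1+\vartheta_x(b))^{-1}$, this derivative collapses to $(1+\vartheta_x(b))^{-1}\vartheta_x'(b)(c)(1+\vartheta_x(b))^{-1}$, confirming that the $c$-linear part of the $(1,2)$ entry is precisely $\kappa_x'(b)(c)$. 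The main (and essentially only) subtlety is this last identification: one must check that the Fr\'echet derivative of the composed expression, which a priori is a difference of two noncommuting products, collapses to the single symmetric product appearing in the $(1,2)$ entry, and this is exactly where the cancellation identity does the work.
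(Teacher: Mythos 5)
Your proposal is correct and follows essentially the same route as the paper's proof: compute $\vartheta_X$ on the upper-triangular argument, invert $1+\vartheta_X$ by the block upper-triangular rule, multiply, and simplify the $(1,2)$ entry using the identity $1-(1+\vartheta_x(b))^{-1}\vartheta_x(b)=(1+\vartheta_x(b))^{-1}$, identifying the $c$-linear part with $\kappa_x'(b)(c)$. The only (harmless) difference is cosmetic: you apply the cancellation to the combined off-diagonal term $\Theta$ before splitting and you spell out the Fr\'echet-derivative verification that the paper leaves implicit.
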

\begin{proof}
 Note that 
\begin{eqnarray*}
&&\kappa_X\left(\begin{bmatrix}
b & c \\ 0 & b
\end{bmatrix}\right) \\
&=& \left(\begin{bmatrix}
1+\vartheta_x(b) & \vartheta_x'(b)(c)+\partial \vartheta_x(b) \\
0 & 1+\vartheta_x(b)
\end{bmatrix}\right)^{-1}\begin{bmatrix}
\vartheta_x(b) & \vartheta_x'(b)(c)+\partial \vartheta_x(b) \\
0 & \vartheta_x(b)
\end{bmatrix} \\
&=& 
\begin{bmatrix}
(1+\vartheta_x(b))^{-1} & -(1+\vartheta_x(b))^{-1}(\vartheta_x'(b)(c)+\partial \vartheta_x(b))(1+\vartheta_x(b))^{-1} \\
0 & (1+\vartheta_x(b))^{-1}
\end{bmatrix}\begin{bmatrix}
\vartheta_x(b) & \vartheta_x'(b)(c)+\partial \vartheta_x(b) \\
0 & \vartheta_x(b)
\end{bmatrix} \\
&=& 
\begin{bmatrix}
(1+\vartheta_x(b))^{-1}\vartheta_x(b) & C_x(b,c) \\
0 & (1+\vartheta_x(b))^{-1}\vartheta_x(b)
\end{bmatrix}
\end{eqnarray*}
where
$$
C_x(b,c)=(1+\vartheta_x(b))^{-1}(\vartheta_x'(b)(c)+\partial \vartheta_x(b))-(1+\vartheta_x(b))^{-1}(\vartheta_x'(b)(c)+\partial \vartheta_x(b))(1+\vartheta_x(b))^{-1}\vartheta_x(b).
$$
Observe that 
\begin{eqnarray*}
&&C_x(b,c) \\
&=& (1+\vartheta_x(b))^{-1}\vartheta_x'(b)(c)\left(1-(1+\vartheta_x(b))^{-1}\vartheta_x(b)\right)+(1+\vartheta_x(b))^{-1}\partial \vartheta_x(b)\left(1-(1+\vartheta_x(b))^{-1}\vartheta_x(b)\right) \\
&=& \kappa'_x(b)(c)+(1+\vartheta_x(b))^{-1}\partial \vartheta_x(b)\left(1-(1+\vartheta_x(b))^{-1}\vartheta_x(b)\right)\\
&=& \kappa'_x(b)(c)+(1+\vartheta_x(b))^{-1}\partial \vartheta_x(b)
(1+\vartheta_x(b))^{-1}
\end{eqnarray*}
Thus, we complete the proof. 
\end{proof}
\begin{defn}\label{defn:inf_kappa_transform}
The map $\partial \kappa_x$ in Lemma \ref{kappa_Xformula} is called the (operator-valued) \emph{infinitesimal $\kappa$-transform of x}. 
\end{defn}
\begin{thm}\label{OVIMprod}
Suppose that $(\mathcal{A},\mathcal{B},E,E')$ is an OVI probability space. Let $x=x^*$ and $y=y^*$ be elements in $\mathcal{A}$ that $x-1$ and $y$ are infinitesimally monotone independent random variables in $\mathcal{A}$. Then 
$$
\partial \kappa_{yx}(b)=\kappa_x'(\kappa_y(b))(\partial \kappa_y(b))+\partial \kappa_x(\kappa_y(b)) \qquad \text{ for }\|b\| \text{ small enough.}
$$
\end{thm}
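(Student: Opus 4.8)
The plan is to transcribe the arguments of Theorem \ref{OVITfcon} and Theorem \ref{OVIbcon} into the monotone setting: lift $x$ and $y$ to the upper triangular probability space, apply the operator-valued monotone product formula \eqref{Kconvolution} there, and read off the $(1,2)$-entry. First I would set $X=\begin{bmatrix} x & 0 \\ 0 & x \end{bmatrix}$ and $Y=\begin{bmatrix} y & 0 \\ 0 & y \end{bmatrix}$ in $\widetilde{\mathcal{A}}$. Since $x-1$ and $y$ are infinitesimally monotone independent, Proposition \ref{UpperProp} yields that $X-I$ and $Y$ are monotone independent with respect to $\widetilde{E}$. Because $X$ and $Y$ are diagonal, their product is $YX=\begin{bmatrix} yx & 0 \\ 0 & yx \end{bmatrix}$, so the very computation that produced Lemma \ref{kappa_Xformula} gives
$$
\kappa_{YX}\left(\begin{bmatrix} b & c \\ 0 & b \end{bmatrix}\right)=\begin{bmatrix} \kappa_{yx}(b) & \kappa_{yx}'(b)(c)+\partial \kappa_{yx}(b) \\ 0 & \kappa_{yx}(b) \end{bmatrix},
$$
whose $(1,2)$-entry houses the quantity I want to identify. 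Here I only need that the transforms $\vartheta,\kappa$ of $yx$ are well-defined and analytic near the origin, which holds for $\|b\|$ small irrespective of whether $yx$ is self-adjoint.

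Next I would apply \eqref{Kconvolution} on the upper triangular level, namely $\kappa_{YX}=\kappa_X\circ\kappa_Y$, and evaluate the right-hand side by two successive applications of Lemma \ref{kappa_Xformula}. The inner application gives
$$
\kappa_Y\left(\begin{bmatrix} b & c \\ 0 & b \end{bmatrix}\right)=\begin{bmatrix} \kappa_y(b) & \kappa_y'(b)(c)+\partial \kappa_y(b) \\ 0 & \kappa_y(b) \end{bmatrix},
$$
and then feeding this matrix into $\kappa_X$ by the same lemma, with $\kappa_y(b)$ playing the role of the diagonal argument and $\kappa_y'(b)(c)+\partial \kappa_y(b)$ the role of the $(1,2)$-argument, produces a $(1,2)$-entry equal to
$$
\kappa_x'(\kappa_y(b))\big(\kappa_y'(b)(c)+\partial \kappa_y(b)\big)+\partial \kappa_x(\kappa_y(b)).
$$
Using that $\kappa_x'(\kappa_y(b))(\cdot)$ is linear, I would split this into a $c$-linear piece $\kappa_x'(\kappa_y(b))(\kappa_y'(b)(c))$ and the remaining $c$-independent terms $\kappa_x'(\kappa_y(b))(\partial \kappa_y(b))+\partial \kappa_x(\kappa_y(b))$.

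Finally I would match $(1,2)$-entries across the identity $\kappa_{YX}=\kappa_X\circ\kappa_Y$. The chain rule identifies $\kappa_{yx}'(b)(c)=(\kappa_x\circ\kappa_y)'(b)(c)=\kappa_x'(\kappa_y(b))(\kappa_y'(b)(c))$, so the $c$-linear parts cancel automatically, and what survives is precisely
$$
\partial \kappa_{yx}(b)=\kappa_x'(\kappa_y(b))(\partial \kappa_y(b))+\partial \kappa_x(\kappa_y(b)),
$$
the asserted formula. The only point that requires care is the bookkeeping in the nested second application of Lemma \ref{kappa_Xformula}: one must check that the lemma applies with a \emph{general} $(1,2)$-argument (not merely an infinitesimal one) and that the Fréchet derivative $\kappa_x'(\kappa_y(b))$ genuinely distributes over the sum $\kappa_y'(b)(c)+\partial \kappa_y(b)$, so that the $c$-dependent and $c$-independent contributions separate cleanly. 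Beyond this, the proof is a direct analogue of the free and Boolean cases.
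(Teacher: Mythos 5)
Your proposal is correct and follows essentially the same route as the paper's own proof: the same lift of $x$ and $y$ to diagonal elements of the upper triangular space via Proposition \ref{UpperProp}, the same application of \eqref{Kconvolution} to obtain $\kappa_{YX}=\kappa_X\circ\kappa_Y$, two applications of Lemma \ref{kappa_Xformula}, and the same chain-rule identification $\kappa_{yx}'(b)(c)=\kappa_x'(\kappa_y(b))(\kappa_y'(b)(c))$ that cancels the $c$-linear parts before comparing $(1,2)$-entries. Your explicit remark that Lemma \ref{kappa_Xformula} must accept a general $(1,2)$-argument (and that the Fr\'{e}chet derivative distributes over the sum) is a point the paper leaves implicit, but it is valid and changes nothing in substance.
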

\begin{proof}
By Proposition \ref{UpperProp}, $X-1$ and $Y$ are monotone where $X=\begin{bmatrix}
x & 0 \\ 0 & x
\end{bmatrix}$ and $Y=\begin{bmatrix}
y & 0 \\ 0 & y
\end{bmatrix}$. 
By \eqref{Kconvolution}, we get
\begin{equation}\label{prodKeqn}
\kappa_{YX}\left(\begin{bmatrix}
b & c \\ 0 & b
\end{bmatrix}\right)=(\kappa_X\circ \kappa_Y)\left(\begin{bmatrix}
b & c \\ 0 & b
\end{bmatrix}\right).
\end{equation}
The left hand side of \eqref{prodKeqn} is 
$$
\begin{bmatrix}
\kappa_{yx}(b) & \kappa_{yx}'(b)(c)+\partial \kappa_{yx}(b) \\
0 & \kappa_{yx}(b)
\end{bmatrix}.
$$
Note that the right hand side of \eqref{prodKeqn} is 
\begin{eqnarray*}
(\kappa_X\circ \kappa_Y)\left(\begin{bmatrix}
b & c \\ 0 & b
\end{bmatrix}\right)
&=& \kappa_X\left(\begin{bmatrix}
\kappa_y(b) & \kappa_y'(b)(c)+\partial \kappa_y(b) \\
0 & \kappa_y(b)
\end{bmatrix}\right) \\
&=& \begin{bmatrix}
\kappa_x(\kappa_y(b)) & \kappa_x'(\kappa_y(b))(\kappa_y'(b)(c)+\partial \kappa_y(b))+\partial \kappa_x(\kappa_y(b))\\
0 & \kappa_x(\kappa_y(b))
\end{bmatrix} \\
&=& \begin{bmatrix}
(\kappa_x\circ\kappa_y)(b) & (\kappa_x\circ\kappa_y)'(b)(c)+\kappa_x'(\kappa_y(b))(\partial \kappa_y(b))+\partial \kappa_x(\kappa_y(b))\\
0 & (\kappa_x\circ \kappa_y)(b)
\end{bmatrix}.
\end{eqnarray*}
Thus, if we compare $(1,2)$-entry of \eqref{prodKeqn}, then we obtain
$$
\partial \kappa_{yx}(b)=\kappa_x'(\kappa_y(b))(\partial \kappa_y(b))+\partial \kappa_x(\kappa_y(b)).
$$
\end{proof}

The following results can be done by using the similar proof in Lemma \ref{kappa_Xformula} and Theorem \ref{OVIMprod}, we omit the proof and state the results as follows. 
\begin{cor}
Let $x=x^*\in\mathcal{A}$ be an element from an OVI probability space $(\mathcal{A},\mathcal{B},E,E')$, we have
\begin{equation*}
\rho_X\left(\begin{bmatrix}
b & c \\ 0 & b
\end{bmatrix}\right)=\begin{bmatrix}
\rho_x(b) & \rho_x'(b)(c)+\partial \rho_x(b) \\
0 & \rho_x(b)
\end{bmatrix}  \qquad \text{ for }\|b\|,\|c\| \text{ small enough,}
\end{equation*}
where
$$
\partial \rho_x(b)=(1+\varrho_x(b))^{-1}\varrho_x(b)(1+\varrho_x(b))^{-1}.
$$
Moreover, if $x=x^*$ and $y=y^*$ are elements in $\mathcal{A}$ such that $x-1$ and $y$ are infinitesimally monotone independent, then
$$
\partial \rho_{xy}(b)=\rho'_x(\rho_y(b))(\partial \rho_y(b))+\partial \rho_x(\rho_y(b)).
$$
\end{cor}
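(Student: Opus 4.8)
The plan is to transcribe the arguments of Lemma~\ref{kappa_Xformula} and Theorem~\ref{OVIMprod}, replacing $\vartheta,\kappa$ by $\varrho,\rho$ throughout and using the reversed defining relation $\rho_X=\varrho_X\,(1+\varrho_X)^{-1}$ together with the monotone product formula \eqref{Kconvolution}. The computation of $\varrho_X$ on upper-triangular arguments has already been carried out in this subsection, so both halves of the corollary reduce to bookkeeping with $2\times2$ upper-triangular matrices.

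For the first identity I would start from
\[
\varrho_X\!\left(\begin{bmatrix} b & c \\ 0 & b \end{bmatrix}\right)=\begin{bmatrix} \varrho_x(b) & D \\ 0 & \varrho_x(b) \end{bmatrix},\qquad D:=\varrho_x'(b)(c)+\partial\varrho_x(b),
\]
invert $1+\varrho_X$ by the standard upper-triangular rule, and multiply $\varrho_X$ on the \emph{right} by $(1+\varrho_X)^{-1}$. The diagonal entry is $\varrho_x(b)(1+\varrho_x(b))^{-1}=\rho_x(b)$, and the off-diagonal entry is
\[
C=D(1+\varrho_x(b))^{-1}-\varrho_x(b)(1+\varrho_x(b))^{-1}D(1+\varrho_x(b))^{-1}.
\]
The decisive simplification is $1-\varrho_x(b)(1+\varrho_x(b))^{-1}=(1+\varrho_x(b))^{-1}$, which collapses $C$ to $(1+\varrho_x(b))^{-1}D(1+\varrho_x(b))^{-1}$. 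Splitting $D$ into its $c$-linear and constant parts, the $c$-linear part equals $\rho_x'(b)(c)$ (as one checks by differentiating $\rho_x=\varrho_x(1+\varrho_x)^{-1}$), while the constant part gives $\partial\rho_x(b)=(1+\varrho_x(b))^{-1}\partial\varrho_x(b)(1+\varrho_x(b))^{-1}$, in exact parallel with Lemma~\ref{kappa_Xformula}.

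For the convolution formula, Proposition~\ref{UpperProp} shows that $X-1$ and $Y$ are monotone independent, so \eqref{Kconvolution} yields $\rho_{XY}=\rho_X\circ\rho_Y$ on upper-triangular arguments. Reading off the $(1,2)$-entry of the left-hand side produces $\rho_{xy}'(b)(c)+\partial\rho_{xy}(b)$. On the right I would insert the first-part formula for $\rho_Y$ and then apply the first-part formula for $\rho_X$ to the resulting matrix, differentiating through the composition exactly as in Theorem~\ref{OVIMprod}; comparing $(1,2)$-entries and discarding the $c$-linear term leaves $\partial\rho_{xy}(b)=\rho_x'(\rho_y(b))(\partial\rho_y(b))+\partial\rho_x(\rho_y(b))$. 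The only genuine subtlety---and hence the main obstacle---is the algebra of the first part: because $\rho_X$ is built from the \emph{right} normalization $\varrho_X(1+\varrho_X)^{-1}$ rather than the left normalization $(1+\vartheta_X)^{-1}\vartheta_X$ used for $\kappa_X$, the factors must be kept in the correct order throughout, and the matching of the $c$-linear term with the Fréchet derivative $\rho_x'(b)(c)$ relies on this placement. Everything else is a direct transcription of the cited proofs.
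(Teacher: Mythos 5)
Your proposal is correct and follows essentially the same route the paper intends: the paper omits the proof precisely because it is the transcription of Lemma \ref{kappa_Xformula} and Theorem \ref{OVIMprod} that you carry out, and your handling of the right normalization $\varrho_X(1+\varrho_X)^{-1}$, the collapse via $1-\varrho_x(b)(1+\varrho_x(b))^{-1}=(1+\varrho_x(b))^{-1}$, and the $(1,2)$-entry comparison after Proposition \ref{UpperProp} all match. One worthwhile observation: your computation yields $\partial \rho_x(b)=(1+\varrho_x(b))^{-1}\partial \varrho_x(b)(1+\varrho_x(b))^{-1}$, which is the correct formula --- the corollary as printed, with $\varrho_x(b)$ rather than $\partial\varrho_x(b)$ in the middle, is a typo, as the paper's own scalar-valued remark $\partial \rho_x(z)=\partial \varrho_x(z)/(1+\varrho_x(z))^2$ confirms.
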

\begin{rem}
When $\mathcal{B}=\mathbb{C}$, we get 
\begin{align*}
\partial \vartheta_x(z)&= E'((1-zx)^{-1}zx);\ &\partial \kappa_x(z)&= \frac{\partial \vartheta_x(z)}{(1+\vartheta_x(z))^2};&  \\
\partial \varrho_x(z)&= E'(xz(1-xz)^{-1});\ &\partial \rho_x(z)&= \frac{\partial \varrho_x(z)}{(1+\varrho_x(z))^2},& 
\end{align*}
which are all well-defined in an neighborhood of the origin. Moreover, if $x-1$ and $y$ are infinitesimally monotone independent, then 
\begin{eqnarray*}
\partial \kappa_{yx}(z)&=&\kappa_x'(\kappa_y(z))\partial \kappa_y(z)+\partial \kappa_x(\kappa_y(z)) \\
\partial \rho_{xy}(z)&=&\rho_x'(\rho_y(z))\partial \rho_y(z)+\partial \rho_x(\rho_y(z)).
\end{eqnarray*}
\end{rem}
\section{Differentiable Paths}

In this section we apply differentiable paths to compute the infinitesimal multiplicative convolutions, which was obtained in Section 3. 

Let us review the framework of differentiable paths. Suppose that $\mathcal{B}$ is a unital $C^*$-algebra, and let $\mathcal{B}\langle X \rangle$ be the $*$-algebra of non-commutative polynomials over $\mathcal{B}$ such that $\mathcal{B}$ and $X$ are algebraically free with $X=X^*.$ 
We denote $\Sigma$ to be the space of all $\mathcal{B}$-valued distributions; that is, $\mu\in \Sigma$ if and only if $\mu$ is a linear $\mathcal{B}$-bimodule completely positive map $\mu:\mathcal{B}\langle X\rangle \to \mathcal{B}$ with $\mu(1)=1$. A $\mathcal{B}$-distribution $\mu$ is called \emph{exponentially bounded} if there is $M>0$ such that for all $b_1,\dots,b_n\in\mathcal{B}$, we have
$$
\| Xb_1Xb_2\cdots b_nX \| \leq M^{n+1} \|b_1\|\|b_2\|\cdots \|b_n\|. 
$$
and $\Sigma_0$ be the space of all $\mathcal{B}$-valued exponentially bounded distributions. 

Note that for each $\mu\in\Sigma_0$, $\{G^{(n)}_{\mu}\}_{n=1}^{\infty}$ encodes $\mu$ (see \cite{VOI95}) where
\begin{equation*} 
G^{(n)}_{\mu}(b):=\mu\otimes 1_n[(b-X\otimes 1_n)^{-1}]=\sum\limits_{n=0}^{\infty} \mu\otimes1_n[ (b^{-1}\cdot X\otimes 1_n)^nb^{-1}],\ \forall\ b\in H^+(M_n(\mathcal{B})).
\end{equation*}
In addition, there exists a OV probability space $(\mathcal{A},\mathcal{B},E)$ and $x=x^*\in\mathcal{A}$ such that $\mu_x=\mu$ (see \cite{PoVi}). 

From the analytic point of view, $G_{\mu}^{(n)}$ on $H^+(M_n(\mathcal{B}))$ essentially has the same behavior of $G_{\mu}:=G_{\mu}^{(1)}$ on $H^+(\mathcal{B}),$ so we will focus on the ground level. 

A path $\mu_t:[0,1]\to \Sigma_0$ is said to be \emph{differentiable} if for each $n\in\mathbb{N}$, $G^{(n)}_{\mu_t}$ is differentiable on $H^+(M_n(\mathcal{B}))$ in the sense that for each $t\in [0,1]$ there is a map $Y^{(n)}_t:H^+(M_n(\mathcal{B}))\to M_n(\mathcal{B})$ such that 
$$
\Big\|\frac{G^{(n)}_{\mu_{t+h}}(b)-G^{(n)}_{\mu_t}(b)}{h} -Y^{(n)}_t(b) \Big\| \longrightarrow 0 \text{ as } h\rightarrow 0 \text{ for all } b\in H^+(M_n(\mathcal{B})).
$$

We denote the ground level limit $Y_t^{(1)}$ by $\partial G_{\mu_t}$, and the Fr{\'e}chet derivative of $G_{\mu_t}$ will be denoted by $G'_{\mu_t}$. That is, 
$$
\partial G_{\mu_t}(b)=\lim_{h\to0}\!\frac{G_{\mu_{t+h}}(b)-G_{\mu_t}(b)}{h}
\text{ and }
G_{\mu_t}'(b)(\cdot)=\lim_{h\to0}\!\frac{G_{\mu_t}(b+h\cdot)-G_{\mu_t}(b)}{h}.
$$
Note that for a given $\mu\in\Sigma_0$, we can also consider many transforms of $\mu$ such as $\psi_{\mu},\ S_{\mu},\ \eta_{\mu}$ and so on. 

Suppose that $(\mathcal{A},\mathcal{B},E)$ is an OV probability space. Recall that we said $x=x^*\in\mathcal{A}$ follows $\mathcal{B}$-valued distribution $\mu\in \Sigma_0$ (we denote it by $x\sim \mu$) if for $n\in\mathbb{N}$ and $b_1,\dots,b_n\in\mathcal{B}$, we have
$$
E(xb_1x\cdots xb_nx) = \mu(Xb_1X\cdots Xb_nX).
$$
Let us recall the following notations. 
Suppose that $x$ and $y$ are self-adjoint elements in $\mathcal{A}$ with $x\sim\mu$ and $y\sim\nu$ for some $\mu,\nu\in\Sigma_0$.
If $x\geq 0$ and $y$ in $\mathcal{A}$ are free over $\mathcal{B}$, then we denote the $\mathcal{B}$-valued distribution of $xy$ by $\mu\boxtimes \nu$.
Suppose that $x-1$ and $y-1$ are Boolean independent over $\mathcal{B}$, then the $\mathcal{B}$-valued distribution of $xy$ is denoted to be $\mu\utimes \nu$. If $x-1$ and $y$ are monotone independent over $\mathcal{B}$, then we denote the $\mathcal{B}$-valued distribution of $xy$ by $\mu\circlearrowright\nu$.
\begin{thm}\label{DPconv}
Suppose that $\mu_t$ and $\nu_t$ are differentiable paths, then on an neighborhood of the origin, one has
\begin{eqnarray}
\label{Feqn}
\partial S_{\mu_t\boxtimes \nu_t} &=& \partial S_{\nu_t}S_{\mu_t}(S_{\nu_t}^{-1}\cdot Id\cdot S_{\nu_t}) + S_{\nu_t} \partial S_{\mu_t}(S_{\nu_t}^{-1}\cdot Id\cdot S_{\nu_t}) \nonumber \\
&& + S_{\nu_t}S_{\mu_t}'\left(S_{\nu_t}^{-1}\cdot Id\cdot S_{\nu_t}\right)\left(S_{\nu_t}^{-1}\cdot Id\cdot \partial S_{\nu_t}-S_{\nu_t}^{-1}\cdot \partial S_{\nu_t}\cdot S_{\nu_t}^{-1}\cdot Id\cdot S_{\nu_t}\right); 
\end{eqnarray}
\begin{eqnarray}\label{F2eqn}
\partial T_{\mu_t\boxtimes \nu_t} &=& T_{\mu_t}(T_{\nu_t}\cdot Id\cdot T_{\nu_t}^{-1})\partial T_{\nu_t} + \partial T_{\mu_t}(T_{\nu_t}\cdot Id\cdot T_{\nu_t}^{-1})T_{\nu_t} \nonumber\\
    &&+ T_{\mu_t}'(T_{\nu_t}\cdot Id\cdot T_{\nu_t}^{-1})(\partial T_{\nu_t}\cdot Id\cdot T_{\nu_t} -T_{\nu_t}\cdot Id\cdot T_{\nu_t}^{-1}\cdot \partial T_{\nu}\cdot T_{\nu_t}^{-1} )T_{\nu_t}; \end{eqnarray}
\begin{eqnarray}
\label{Beqn}
\partial \eta_{\mu_t\utimes \nu_t}&=&\eta_{\mu_t}\partial \eta_{\nu_t}+\partial \eta_{\mu_t}\eta_{\nu_t}; \\
\label{Meqn1}
\partial \kappa_{\nu_t\circlearrowright\mu_t} &=& \kappa_{\mu_t}'(\kappa_{\nu_t})(\partial \kappa_{\nu_t})+\partial \kappa_{\mu_t}(\kappa_{\nu_t}); \\
\label{Meqn2}
\partial \rho_{\mu_t\circlearrowright\nu_t} &=& \rho_{\mu_t}'(\rho_{\nu_t})(\partial\rho_{\nu_t})+\partial \rho_{\mu_t}(\rho_{\nu_t}). 
\end{eqnarray}
\end{thm}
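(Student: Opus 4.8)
The plan is to establish each of the five formulas \eqref{Feqn}--\eqref{Meqn2} by reducing the differentiable-path setting to the fixed-element infinitesimal setting already handled in Section 3, and then invoking the respective product formulas (Theorem \ref{OVITfcon}, Theorem \ref{OVIbcon}, Theorem \ref{OVIMprod}, and its \texttt{\textbackslash rho} analogue). The guiding observation is that a differentiable path of distributions carries exactly the same data as an operator-valued \emph{infinitesimal} distribution: if $\mu_t$ is a differentiable path, then for each fixed $t_0$ the pair $(\mu_{t_0}, \partial_t\big|_{t=t_0}\mu_t)$ plays the role of the pair $(E, E')$. Concretely, given the representing space at time $t_0$, one sets $E(\cdot)=\mu_{t_0}(\cdot)$ and defines $E'(\cdot)$ as the $t$-derivative of the moment functional along the path. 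The differentiability hypothesis guarantees $E'$ is a well-defined, bounded, $\mathcal{B}$-bimodule, self-adjoint map with $E'(1)=0$, so that $(\mathcal{A},\mathcal{B},E,E')$ is a $C^*$-operator-valued infinitesimal probability space. Under this dictionary, the derivatives $\partial S_{\mu_t}$, $\partial T_{\mu_t}$, $\partial\eta_{\mu_t}$, $\partial\kappa_{\mu_t}$, $\partial\rho_{\mu_t}$ along the path coincide with the infinitesimal transforms $\partial S_x$, $\partial T_x$, etc., defined in Section 3.

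First I would verify that the transforms commute with the path derivative $\partial_t$ in the expected way. Since each transform ($\psi$, $S$, $T$, $\eta$, $\vartheta$, $\kappa$, $\varrho$, $\rho$) is built from $G_{\mu_t}$, hence from $\mu_t$, by Fr\'echet-analytic operations (inversion, composition, functional inverse), differentiability of the path propagates to differentiability of each transform; the chain rule then identifies $\partial_t$ applied to a transform with the corresponding infinitesimal transform. In particular $\partial S_{\mu_t}=-(\psi_{\mu_t}^{\langle -1\rangle})'\big(\partial\psi_{\mu_t}\circ\psi_{\mu_t}^{\langle-1\rangle}\big)$ matches the definition of $\partial S_x$, and similarly for the others through \eqref{formula: OVI-T-transform}, Lemma \ref{B_X formula}, and Lemma \ref{kappa_Xformula}. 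The technical point requiring care is the use of the functional inverse: for $S$ and $T$ one must differentiate $\psi_{\mu_t}^{\langle-1\rangle}$ in $t$, which demands that $\psi_{\mu_t}'(0)$ stay invertible along the path; this holds in a neighborhood of the origin by the inverse-function-theorem argument already cited for the fixed case.

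Next I would realize the two paths simultaneously. For each $t$ there exist, by the representation results quoted in this section (\cite{PoVi}), self-adjoint $x_t\sim\mu_t$ and $y_t\sim\nu_t$ in a common $C^*$-operator-valued probability space, with $x_t,y_t$ enjoying the relevant independence (free, Boolean, or monotone) dictated by the target convolution. The key is that freeness, Boolean independence, and monotone independence are \emph{preserved} along the path, so differentiating the moment-level independence relation in $t$ produces exactly the infinitesimal independence relations of Definition in Section 2.2. Thus $(x_t,y_t)$ become, at each $t$, an infinitesimally free (resp. Boolean, monotone) pair in the associated infinitesimal probability space $(\mathcal{A},\mathcal{B},\mu_t,\partial_t\mu_t)$. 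Applying Theorem \ref{OVITfcon} yields \eqref{F2eqn} upon substituting $\partial T_x\mapsto\partial T_{\mu_t}$, $T_x\mapsto T_{\mu_t}$, and the evaluation point $T_y(w)wT_y(w)^{-1}\mapsto T_{\nu_t}\cdot\mathrm{Id}\cdot T_{\nu_t}^{-1}$, and matching the auxiliary term $C$ with the displayed argument of $T_{\mu_t}'$; formula \eqref{Feqn} follows the same way from Theorem \ref{OVIMfcon}. Formulas \eqref{Beqn}, \eqref{Meqn1}, \eqref{Meqn2} follow identically from Theorem \ref{OVIbcon}, Theorem \ref{OVIMprod}, and its corollary.

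The main obstacle I expect is \emph{not} the algebra, which is a transcription of Section 3, but rather the analytic justification that the abstract path derivative agrees with the infinitesimal transform and may be interchanged with the transform-defining operations. Specifically, one must confirm: (i) the $t$-derivative and the Fr\'echet $b$-derivative can be interchanged where the chain rule is applied (a joint-continuity/uniform-convergence estimate on the difference quotient, supplied by the definition of differentiable path applied at matrix level $(n)$); (ii) the composition identity for the monotone case, where $\kappa_{\nu_t\circlearrowright\mu_t}=\kappa_{\mu_t}\circ\kappa_{\nu_t}$ must be differentiated as a composition of $t$-dependent \emph{and} $b$-dependent maps, so that the product-rule term $\kappa_{\mu_t}'(\kappa_{\nu_t})(\partial\kappa_{\nu_t})$ and the explicit-$t$ term $\partial\kappa_{\mu_t}(\kappa_{\nu_t})$ separate cleanly; and (iii) that all transforms remain well-defined on a \emph{common} neighborhood of the origin uniformly in $t\in[0,1]$, which follows from exponential boundedness of the $\mu_t,\nu_t$ but should be stated. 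Once these continuity and interchange facts are in place, each formula is a direct specialization of the corresponding theorem from Section 3, with the fixed elements $x,y$ replaced by the path families $x_t,y_t$ and the fixed infinitesimal structure $E'$ replaced by $\partial_t$.
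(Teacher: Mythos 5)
Your proposal is correct in substance but takes a genuinely different, and considerably heavier, route than the paper. The paper's proof never constructs an infinitesimal probability space from the path at all: it simply observes that at each fixed $t$ the operator-valued convolution identities \eqref{Sconvolution}, \eqref{Bconvolution}, and \eqref{Kconvolution} hold for $\mu_t,\nu_t$, and differentiates these $t$-parametrized identities directly in $t$ --- the product rule gives \eqref{Beqn}, the chain rule gives \eqref{Meqn1} and \eqref{Meqn2}, and a short product-plus-chain-rule computation applied to $S_{\mu_t\boxtimes\nu_t}(w)=S_{\nu_t}(w)\,S_{\mu_t}\!\left(S_{\nu_t}(w)^{-1}wS_{\nu_t}(w)\right)$ gives \eqref{Feqn}, with \eqref{F2eqn} analogous. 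In other words, what you dismiss as ``a transcription of Section 3,'' together with your interchange point (ii), \emph{is} the entire proof; your detour through the jet dictionary $(\mu_{t_0},\partial_t\mu_t|_{t=t_0})\leftrightarrow(E,E')$, the verification that differentiating the independence relations along the path yields infinitesimal independence (which does work, including the $t$-dependent centering $a_j(t)=p_j-E_t(p_j)$ in the free case, since the derivative of $E_t$ vanishes on $\mathcal{B}$), and the invocation of Theorems \ref{OVITfcon}, \ref{OVIbcon}, and \ref{OVIMprod} are all machinery the paper avoids. What your route buys is conceptual clarity: it exhibits differentiable paths as one-parameter versions of infinitesimal structures, making Theorem \ref{DPconv} literally a corollary of Section 3. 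What it costs is the extra lemmas you yourself flag, plus one identification that deserves more care than a passing mention: differentiating $S_{\mu_t}(w)=w^{-1}(1+w)\psi_{\mu_t}^{\langle-1\rangle}(w)$ in $t$ yields $\partial_t S_{\mu_t}(w)=-w^{-1}(1+w)\,(\psi_{\mu_t}^{\langle-1\rangle})'(w)\!\left(\partial\psi_{\mu_t}(\psi_{\mu_t}^{\langle-1\rangle}(w))\right)$, which must be reconciled with the stated definition of $\partial S_x$ before the substitution $\partial S_x\mapsto\partial_t S_{\mu_t}$ in Theorem \ref{OVIMfcon} is legitimate; a similar check is needed for $\partial T$, $\partial\eta$, $\partial\kappa$, $\partial\rho$. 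None of this is a gap --- your plan closes, and your obstacle list (i)--(iii) correctly isolates the only genuine analytic content --- but the paper's direct differentiation is shorter, self-contained, and logically independent of the Section 3 theorems.
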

\begin{proof}
Note that for $t>0$ and $b\in\mathcal{B}$ small enough, we have
\begin{eqnarray*}
\eta_{\mu_t\utimes \nu_t}(b)&=&\eta_{\mu_t}(b)\eta_{\nu_t}(b); \\
\kappa_{\nu_t\circlearrowright\mu_t}(b) &=& (\kappa_{\mu_t}\circ \kappa_{\nu_t})(b); \\
\rho_{\mu_t\circlearrowright\nu_t}(b) &=& (\rho_{\mu_t}\circ \rho_{\nu_t})(b). 
\end{eqnarray*}
Thus, we obtain \eqref{Beqn}, \eqref{Meqn1}, and \eqref{Meqn2} immediately by differentiating above equations with respect to $t$. 

The proof of \ref{F2eqn} is similar to $\ref{Feqn}$. We will only demonstrate \eqref{Feqn}, observe that for $t>0$ and $w\in \mathcal{B}$ small enough, 
\begin{equation}\label{Sproduct}
    S_{\mu_t\boxtimes \nu_t}(w) = S_{\nu_t}(w) S_{\mu_t}\left(S_{\nu_t}(w)^{-1}wS_{\nu_t}(w) \right).
\end{equation}
By differentiating \eqref{Sproduct} with respect to $t$, we have 
\begin{eqnarray*}
&&\partial S_{\mu_t\boxtimes \nu_t}  \\
&=& \partial S_{\nu_t}(w) S_{\mu_t}\left(S_{\nu_t}(w)^{-1}wS_{\nu_t}(w) \right) + S_{\nu_t}(w) \partial \left[ S_{\mu_t}\left(S_{\nu_t}(w)^{-1}wS_{\nu_t}(w) \right) \right].
\end{eqnarray*}
Note that 
\begin{eqnarray*}
 &&\partial \left[ S_{\mu_t}\left(S_{\nu_t}(w)^{-1}wS_{\nu_t}(w) \right) \right]  \\
 &=& \partial S_{\nu_t}\left(S_{\nu_t}(w)^{-1}wS_{\nu_t}(w)\right) + S_{\nu_t}'\left(S_{\nu_t}(w)^{-1}wS_{\nu_t}(w)\right) \partial \left[ S_{\nu_t}(w)^{-1}wS_{\nu_t}(w) \right] \\
 &=& \partial S_{\nu_t}\left(S_{\nu_t}(w)^{-1}wS_{\nu_t}(w)\right) \\
 && + S_{\nu_t}'\left(S_{\nu_t}(w)^{-1}wS_{\nu_t}(w)\right) \left( S_{\nu_t}(w)^{-1}w\partial S_{\nu_t}(w)-S_{\nu_t}(w)^{-1}\partial S_{\nu_t}(w)S_{\nu_t}(w)^{-1}wS_{\nu_t}(w)\right).
\end{eqnarray*}
Hence, we obtain \eqref{Feqn}. 
\end{proof}

\section{Infinitesimal $t$-coefficients }\label{section: inf t-coeff}

In this section, we will introduce the notion of infinitesimal $t$-coefficients $\{t_n'\}_{n\geq 0}$ and study their properties. Moreover, we provide an application in random matrix theory. Specifically, we will compute the limit infinitesimal law of the product of complex Wishart matrices in terms of $T$-transform and infinitesimal $T$-transform.  

\subsection{Multi-variable case}\label{Subsect: Multi-t-coeff}
Suppose $(\mathcal{A},\varphi,\varphi')$ is an incps. We recall that  $\hat{\mathcal{A}}=\{a\in\mathcal{A}\mid \varphi(a)\neq 0\}$, and for a given $\pi\in NCL(n)$, let $s(\pi)$ be the set of all $k\in [n]$ such that there are no blocks of $\pi$ whose minimal element is $k$. 
\begin{defn}\label{defn: inf t-coeff}
The \emph{infinitesimal $t$-coefficients} $\{t_n':\mathcal{A}\times(\hat{\mathcal{A}})^{n}\to \mathbb{C}\}_{n\geq 0}$ is a sequence of maps given via the following recurrence: 
$$
\varphi'(a_1\cdots a_n)=\sum_{\pi\in NCL(n)} \partial t_{\pi}(a_1,\dots,a_n)
$$
where 
\begin{eqnarray*}
\partial t_{\pi}(a_1,\dots,a_n)&=&\sum_{\substack{V\in\pi; \\ V=\{i_1,\dots,i_s\} }} t'_{s-1}(a_{i_1},\dots,a_{i_s})\prod_{\substack{W\in \pi, W\neq V; \\ W=\{j_1,\dots,j_r\}}}t_{r-1}(a_{j_1},\dots,a_{j_{r}})\prod_{k\in s(\pi)}t_0(a_k) \\
&&+ \prod_{\substack{V\in\pi; \\ V=\{i_1,\dots,i_s\} }} t_{s-1}(a_{i_1},\dots,a_{i_s}) \sum_{k\in s(\pi)} t_0'(a_k)\prod_{k'\in s(\pi);k'\neq k}t_0(a_{k'}).
\end{eqnarray*}
\end{defn}
Let $(\widetilde{\mathcal{A}},\widetilde{\varphi},\widetilde{\mathbb{C}})$ be the upper triangular probability space induced by $(\mathcal{A},\varphi,\varphi')$. Note that the commutativity of $\widetilde{\mathbb{C}}$ inherits many properties in ncps $(\mathcal{A},\varphi)$. First, let $a_1,\dots,a_n\in \hat{\mathcal{A}}$ such that for all $j\in [n]$, we set 
$A_j=\begin{bmatrix}
    a_j & 0 \\ 0 & a_j
\end{bmatrix}$ for each $j\in [n]$. Note that $\widetilde{\varphi}(A_j)=\begin{bmatrix}
    \varphi(a_j) & \varphi'(a_j) \\ 0 & \varphi(a_j)
\end{bmatrix}$ is invertible for all $j\in [n]$. 

Then the $\widetilde{t}$-coefficients can be defined via the similar equation 
$$
\widetilde{\varphi}(A_1\cdots A_n)=\sum_{\pi\in NCL(n)}\widetilde{t}_{\pi}(A_1,\dots,A_n) 
$$
where 
$$
\widetilde{t}_{\pi}(A_1,\dots,A_n)=\prod_{\substack{V\in \pi; \\ V=\{i_1,\dots,i_s\} }}\widetilde{t}_{s-1}(A_{i_1},\dots,A_{i_s})\prod_{k\in s(\pi)}\widetilde{t}_0(A_k).
$$
Following the definition of infinitesimal $t$-coefficients and induction, it is easy to see that
\begin{equation}\label{formula: matrix-valued of t_s}
\widetilde{t}_{s-1}(A_{i_1},\dots,A_{i_s}) = \begin{bmatrix}
    t_{s-1}(a_{i_1},\dots,a_{i_s}) & t_{s-1}'(a_{i_1},\dots,a_{i_s})\\ 0 & t_{s-1}(a_{i_1},\dots,a_{i_s})
\end{bmatrix}.    
\end{equation}
Due to the commutativity of $\widetilde{\mathbb{C}}$, the analogues arguments of Proposition \ref{prop: t-coefficient} and Proposition \ref{Prop: vanishing of mixed t-coeff} hold; therefore, we have the following results immediately.  
\begin{prop}\label{prop: kappa' vs t'}
Suppose $(\mathcal{A},\varphi,\varphi')$ is an incps, and $a_1,\dots,a_n\in\hat{\mathcal{A}}$, then we have
\begin{equation}\label{formula: kappa'-t'}
r_n'(a_1,\dots,a_n) = \sum_{\pi\in \langle 1_n\rangle} \partial t_{\pi}(a_1,\dots,a_n).  
\end{equation}
\end{prop}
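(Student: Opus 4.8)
The plan is to deduce the identity by lifting everything to the upper triangular probability space $(\widetilde{\mathcal{A}},\widetilde{\varphi},\widetilde{\mathbb{C}})$ induced by $(\mathcal{A},\varphi,\varphi')$, applying a $\widetilde{\mathbb{C}}$-valued version of Proposition \ref{prop: t-coefficient}, and then reading off the $(1,2)$-entry. First I would record that, since $\widetilde{\mathbb{C}}$ is a commutative unital algebra, Popa's combinatorial argument carries over verbatim to the $\widetilde{\mathbb{C}}$-valued setting, so that the matrix-valued free cumulants $\widetilde{\kappa}_n$ and matrix-valued $t$-coefficients $\widetilde{t}_n$ satisfy
$$
\widetilde{\kappa}_n(A_1,\dots,A_n)=\sum_{\pi\in\langle 1_n\rangle}\widetilde{t}_{\pi}(A_1,\dots,A_n),
$$
where $A_j=\begin{bmatrix} a_j & 0 \\ 0 & a_j\end{bmatrix}$. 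This is precisely the analogue of Proposition \ref{prop: t-coefficient} invoked in the paragraph preceding the statement.

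Next I would evaluate both sides as $2\times 2$ upper triangular matrices with equal diagonal entries and compare their $(1,2)$-slots. For the right-hand side, \eqref{formula: matrix-valued of t_s} shows that each factor $\widetilde{t}_{s-1}(A_{i_1},\dots,A_{i_s})$, and likewise each singleton factor $\widetilde{t}_0(A_k)$, is upper triangular with the ordinary $t$-coefficient on the diagonal and the primed coefficient in the $(1,2)$-slot. The elementary Leibniz rule for a product of such matrices, namely that the $(1,2)$-entry of $\prod_i \begin{bmatrix} d_i & o_i \\ 0 & d_i \end{bmatrix}$ equals $\sum_i o_i \prod_{j\neq i} d_j$, turns the $(1,2)$-entry of $\widetilde{t}_\pi(A_1,\dots,A_n)$ into exactly the derivative-style sum defining $\partial t_\pi(a_1,\dots,a_n)$ in Definition \ref{defn: inf t-coeff}, in which one factor (a block coefficient $t_{s-1}$ or a singleton coefficient $t_0$) is replaced by its primed version and all others are left unprimed.

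It then remains to identify the $(1,2)$-entry of the left-hand side. The claim is that
$$
\widetilde{\kappa}_n(A_1,\dots,A_n)=\begin{bmatrix} \kappa_n(a_1,\dots,a_n) & \kappa_n'(a_1,\dots,a_n) \\ 0 & \kappa_n(a_1,\dots,a_n)\end{bmatrix},
$$
so that its off-diagonal entry is the infinitesimal free cumulant. I would establish this by the same Leibniz bookkeeping applied to the $\widetilde{\mathbb{C}}$-valued moment--cumulant relation $\widetilde{\varphi}(A_1\cdots A_n)=\sum_{\pi}\widetilde{\kappa}_\pi$: matching diagonal entries recovers the ordinary moment--cumulant formula, forcing the diagonal of $\widetilde{\kappa}_n$ to equal $\kappa_n$, while matching $(1,2)$-entries against $\widetilde{\varphi}(A_1\cdots A_n)=\begin{bmatrix}\varphi(a_1\cdots a_n) & \varphi'(a_1\cdots a_n)\\ 0 & \varphi(a_1\cdots a_n)\end{bmatrix}$ reproduces exactly the recursion $\varphi'(a_1\cdots a_n)=\sum_\pi \partial\kappa_\pi$ that defines $\kappa_n'$. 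By uniqueness of the solution of that recursion, the off-diagonal entry of $\widetilde{\kappa}_n$ is $\kappa_n'$.

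Comparing $(1,2)$-entries in the displayed matrix identity then yields $\kappa_n'(a_1,\dots,a_n)=\sum_{\pi\in\langle 1_n\rangle}\partial t_\pi(a_1,\dots,a_n)$, as desired. I expect the main obstacle to be the third step: carefully justifying that the off-diagonal entry of the matrix-valued free cumulant coincides with $\kappa_n'$. The rest is routine manipulation of the commutative $2\times 2$ structure, but this identification is where the compatibility between the matrix moment--cumulant formula and the infinitesimal $\partial\kappa_\pi$ construction must be pinned down, which I would either verify directly by induction or invoke from the upper triangular framework underlying Proposition \ref{UpperProp} and \cite{FN10}.
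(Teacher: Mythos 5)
Your proposal is correct and follows essentially the same route as the paper: the paper's proof likewise sets $A_j=\begin{bmatrix} a_j & 0 \\ 0 & a_j\end{bmatrix}$, invokes the $\widetilde{\mathbb{C}}$-valued analogue of Proposition \ref{prop: t-coefficient} (valid by commutativity of $\widetilde{\mathbb{C}}$) to get $\widetilde{\kappa}_n(A_1,\dots,A_n)=\sum_{\pi\in\langle 1_n\rangle}\widetilde{t}_{\pi}(A_1,\dots,A_n)$, and reads off the $(1,2)$-entries. Your additional verification that the $(1,2)$-entry of $\widetilde{\kappa}_n$ equals $\kappa_n'$ (via the moment--cumulant recursion and its uniqueness) is a detail the paper leaves implicit, and it is a sound way to pin it down.
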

\begin{proof}
Let $A_j=\begin{bmatrix}
    a_j & 0 \\ 0 & a_j
\end{bmatrix}$ for all $j=1,\dots,n$. Note that \eqref{formula: kappa'-t'} can be deduced by comparing the $(1,2)$-entries of the following equation 
\begin{equation*}
\widetilde{r}_n(A_1,\dots,A_n)=\sum_{\pi\in \langle 1_n\rangle} \widetilde{t}_{\pi}(A_1,\dots,A_n).
\end{equation*}
\end{proof}
\begin{prop}
Unital subalgebras $\{A_i\}_{i\in I}$ are infinitesimally free if and only if for all $a_k\in \mathcal{A}_{i_k}$ for all $k$ and $i_1,\dots,i_n\in I$, we have
$$
t_{n-1}(a_1,\dots,a_n)=t_{n-1}'(a_1,\dots,a_n)=0
$$
whenever there exist $r,s\in [n]$ with $i_r\neq i_s$.  
\end{prop}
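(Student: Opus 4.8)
The plan is to run the same upper triangular reduction used for Proposition~\ref{prop: kappa' vs t'}, combining it with the scalar characterization of freeness via vanishing mixed $t$-coefficients in Proposition~\ref{Prop: vanishing of mixed t-coeff}. Throughout I take the entries $a_1,\dots,a_n$ in $\hat{\mathcal{A}}$, which is the natural domain on which $t_{n-1}$ and $t_{n-1}'$ are defined and on which the matrix identity \eqref{formula: matrix-valued of t_s} was derived.

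First I would set $A_j=\begin{bmatrix} a_j & 0 \\ 0 & a_j\end{bmatrix}\in\widetilde{\mathcal{A}}_{i_j}$ for each $j$, so that $\widetilde{\varphi}(A_j)$ is invertible in $\widetilde{\mathbb{C}}$. The crucial input is \eqref{formula: matrix-valued of t_s}, which reads off the $\widetilde{\mathbb{C}}$-valued coefficient $\widetilde{t}_{n-1}(A_1,\dots,A_n)$ as the upper triangular matrix whose diagonal entry is $t_{n-1}(a_1,\dots,a_n)$ and whose $(1,2)$-entry is $t_{n-1}'(a_1,\dots,a_n)$. Hence $\widetilde{t}_{n-1}(A_1,\dots,A_n)=0$ in $\widetilde{\mathbb{C}}$ is equivalent to the simultaneous vanishing $t_{n-1}(a_1,\dots,a_n)=t_{n-1}'(a_1,\dots,a_n)=0$, which is exactly the conclusion sought. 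The equivalence is then assembled in two steps: by specializing Proposition~\ref{UpperProp} to $\mathcal{B}=\mathbb{C}$, the subalgebras $\{\mathcal{A}_i\}_{i\in I}$ are infinitesimally free if and only if $\{\widetilde{\mathcal{A}}_i\}_{i\in I}$ are free in $(\widetilde{\mathcal{A}},\widetilde{\varphi})$; and, since $\widetilde{\mathbb{C}}$ is commutative, the $\widetilde{\mathbb{C}}$-valued analogue of Proposition~\ref{Prop: vanishing of mixed t-coeff} gives that $\{\widetilde{\mathcal{A}}_i\}_{i\in I}$ are free if and only if $\widetilde{t}_{n-1}(A_1,\dots,A_n)=0$ whenever the tuple is mixed, i.e. $i_r\neq i_s$ for some $r,s$. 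Chaining these equivalences with the entrywise reformulation above closes the argument. The normalization $\widetilde{\varphi}(A_j)=I$ demanded in the form of Proposition~\ref{Prop: vanishing of mixed t-coeff} can be arranged by rescaling $A_j$ with $\widetilde{\varphi}(A_j)^{-1}\in\widetilde{\mathbb{C}}\subseteq\widetilde{\mathcal{A}}_{i_j}$ and using multilinearity of $\widetilde{t}_{n-1}$, which does not affect whether the coefficient vanishes.

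The step I expect to carry the real weight is invoking the $\widetilde{\mathbb{C}}$-valued analogue of Proposition~\ref{Prop: vanishing of mixed t-coeff}: Popa's criterion in \cite{POP-linked-08} is phrased over the field $\mathbb{C}$, whereas here the base is the commutative but non-reduced algebra $\widetilde{\mathbb{C}}$, which contains the nonzero nilpotent $\begin{bmatrix}0 & 1 \\ 0 & 0\end{bmatrix}$. What must be checked is that Popa's non-crossing-linked-partition derivation of the mixed-vanishing criterion relies only on commutativity of the base and invertibility of the relevant moments, never on $\mathbb{C}$ being a field; this is precisely the content of the remark preceding Proposition~\ref{prop: kappa' vs t'}. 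Once that extension is granted, the rest of the proof is the bookkeeping made transparent by the $2\times 2$ entrywise structure of \eqref{formula: matrix-valued of t_s}.
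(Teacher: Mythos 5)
Your proposal is correct and takes essentially the same route as the paper: the paper's proof likewise reduces to the upper triangular space via Proposition \ref{UpperProp}, invokes the $\widetilde{\mathbb{C}}$-valued analogue of Proposition \ref{Prop: vanishing of mixed t-coeff} (justified, as you note, by the commutativity of $\widetilde{\mathbb{C}}$), and reads off the simultaneous vanishing of $t_{n-1}$ and $t_{n-1}'$ from the diagonal and $(1,2)$-entries in \eqref{formula: matrix-valued of t_s}. Your additional remarks on the normalization $\widetilde{\varphi}(A_j)=I$ and on checking that Popa's argument never uses that the base is a field merely make explicit what the paper leaves implicit.
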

\begin{proof}
The proof follows directly from the analogue statement of Proposition \ref{Prop: vanishing of mixed t-coeff} and \eqref{formula: matrix-valued of t_s}. 
\end{proof}

\subsection{Single variable case and application to random matrix theory}\label{Subsect: single-T-coeff}

In this subsection, we focus on the single variable case. To simplify notations, for a given $a\in\hat{\mathcal{A}}$, we write $r_n(a)=r_n(a,\dots,a), r_n'(a)=r_n'(a,\dots,a) , t_n(a)=t_n(a,\cdots,a),$ and $t_n'(a)=t_n'(a,\dots,a)$. 

Let $a\in\hat{\mathcal{A}}$. Due to the commutativity of $\widetilde{\mathbb{C}}$, we have 
analogous formula of \eqref{eqn: single-variable - K vs t} as follows.
\begin{equation}\label{eqn: Upp-kappa-t-formula}
\widetilde{r}_n(A)=\sum_{\pi\in NC(n-1)}\Big(\prod_{V\in \pi}\widetilde{t}_{|V|}(A)\Big)\widetilde{t}_0(A)^{n-\#(\pi)} \text{ for all }n\geq 2
\end{equation}
where $A=\begin{bmatrix}
    a & 0 \\ 0 & a
\end{bmatrix}.$ 
By comparing the $(1,2)$-entry of \eqref{eqn: Upp-kappa-t-formula}, we obtain for all $n\geq 2$,
\begin{eqnarray}
r'_n(a) &=&\sum_{\pi\in NC(n-1)} \Big(\sum_{V\in \pi}t'_{|V|}(a)\prod_{W\in \pi; W\neq V}t_{|W|}(a)\cdot t_0(a)^{n-\#(\pi)} \\
&&\qquad \qquad+\prod_{V\in \pi}t_{|V|}(a)\cdot (n-\#(\pi)) \cdot t_0'(a)\cdot t_0(a)^{n-\#(\pi)-1}\Big). \nonumber
\end{eqnarray}
In addition, for any $c\in\mathbb{C}$ and $a\in\hat{\mathcal{A}}$, the property $\widetilde{t}_n(cA) = c\widetilde{t}_n(A)$ holds for each $n\geq 0$ and $A=diag(a,a)$ by the commutativity of $\widetilde{\mathbb{C}}$. Then we combine \eqref{formula: matrix-valued of t_s} to deduce that $t_n'(ca)=ct_n'(a).$  

In \cite{mingo19}, Mingo studied the limit infinitesimal distribution of complex Wishart matrices, and he showed that independent complex Wishart matrices are asymptotically infinitesimally free. We will show how to find the limit infinitesimal law of the product of two independent complex Wishart matrices.  

Let us first recall that $X_N$ is said to be a complex Wishart matrix if $X_N=\frac{1}{N}G^*G$ where $G$ is a $M\times N$ Gaussian random matrix with $N(0,1)$ entries. If we assume that
$$
\lim\limits_{N\to \infty}\frac{M}{N} = c, \text{\ and also\ }\lim\limits_{N\to \infty}(M-Nc) = c', 
$$
then there is an infinitesimal distribution for $X_N$ in the sense that there is a infinitesimal probability space $(\mathcal{A},\varphi,\varphi')$ and $x\in\mathcal{A}$ such that
\begin{eqnarray*}
\varphi(x^k)&=&\lim\limits_{N\to \infty} E(tr(X_N^k)); \\
\varphi'(x^k)&=&\lim\limits_{N\to\infty} N[E(tr(X_N^k))-\varphi(x^k)].
\end{eqnarray*}
Mingo showed that $r_n(x)=c$ and $r'_n(x)=c'$ for all $n\geq 1$ (see \cite{mingo19}). We present how to find $t_n(x)$ and $t'_n(x)$ as the following proposition. 

\begin{prop}\label{prop: inf Wishart t-coeff}
$$t_n(x) = \begin{cases}
    c & \text{ if }n=0 \\
    1  & \text{ if }n=1 \\
    0 & \text{ if }n\geq 2.
\end{cases} \text{ and }t'_n(x) = \begin{cases}
    c'& \text{ if }n=0 \\
    0 & \text{ if }n\geq 1,
\end{cases}
$$
which is equivalent to $$T_x(z)=c+z \text{ and } 
\partial T_x(z)=c'. 
$$
\end{prop}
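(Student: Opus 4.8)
The plan is to carry out the entire computation inside the upper triangular probability space $(\widetilde{\mathcal{A}},\widetilde{\varphi},\widetilde{\mathbb{C}})$ attached to $(\mathcal{A},\varphi,\varphi')$, working with $X=\begin{bmatrix} x & 0 \\ 0 & x\end{bmatrix}$ and determining the matrix-valued coefficients $\widetilde{t}_n(X)$; the desired $t_n(x)$ and $t_n'(x)$ are then just the diagonal and $(1,2)$-entry of $\widetilde{t}_n(X)$ by \eqref{formula: matrix-valued of t_s}. First I would record the two inputs. On the one hand, $\widetilde{\varphi}(X)=\begin{bmatrix} c & c' \\ 0 & c\end{bmatrix}=:\gamma$, and since the infinitesimal free cumulants are the $(1,2)$-entries of the upper triangular free cumulants (the cumulant analogue of \eqref{formula: matrix-valued of t_s}), Mingo's identities $\kappa_n(x)=c$ and $\kappa_n'(x)=c'$ give $\widetilde{\kappa}_n(X)=\gamma$ for every $n\geq 1$. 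On the other hand, because $x\in\hat{\mathcal{A}}$ forces $c=\lim M/N\neq 0$, the element $\gamma$ is invertible in the commutative ring $\widetilde{\mathbb{C}}$.

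The core of the argument is the recursion \eqref{eqn: Upp-kappa-t-formula}, which over $\widetilde{\mathbb{C}}$ is formally identical to the scalar free Poisson recursion. The $n=1$ instance of the defining relation for the $\widetilde{t}$-coefficients yields $\widetilde{t}_0(X)=\widetilde{\varphi}(X)=\gamma$. I would then induct on $n$ to prove $\widetilde{t}_1(X)=I$ and $\widetilde{t}_n(X)=0$ for $n\geq 2$. For $n=2$ the sole partition of $NC(1)$ is the single block, so \eqref{eqn: Upp-kappa-t-formula} reads $\gamma=\widetilde{\kappa}_2(X)=\widetilde{t}_1(X)\,\gamma$, and invertibility of $\gamma$ forces $\widetilde{t}_1(X)=I$. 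For the inductive step, assume $\widetilde{t}_k(X)=0$ for $2\leq k\leq n-2$. In the sum over $\pi\in NC(n-1)$ any partition other than the all-singletons partition and the single block $[n-1]$ has at least two blocks, hence a block of size between $2$ and $n-2$, so its term vanishes by the inductive hypothesis. The singletons partition contributes $\widetilde{t}_1(X)^{\,n-1}\gamma=\gamma$ and the single block contributes $\widetilde{t}_{n-1}(X)\,\gamma^{\,n-1}$, so comparing with $\widetilde{\kappa}_n(X)=\gamma$ gives $\widetilde{t}_{n-1}(X)\,\gamma^{\,n-1}=0$ and therefore $\widetilde{t}_{n-1}(X)=0$.

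Reading off the diagonal and $(1,2)$-entries of $\widetilde{t}_0(X)=\gamma$, $\widetilde{t}_1(X)=I$, and $\widetilde{t}_n(X)=0$ for $n\geq 2$ via \eqref{formula: matrix-valued of t_s} produces exactly the stated values of $t_n(x)$ and $t_n'(x)$. The equivalence with the transforms follows by matching the two descriptions of the $T$-transform of $X$ over $\widetilde{\mathbb{C}}$: its expansion $\sum_{n\geq 0}\widetilde{t}_n(X)Z^n$ has $(1,2)$-entry $\sum_{n\geq 0}t_n'(x)z^n$ after substituting $Z=\begin{bmatrix} z & 0 \\ 0 & z\end{bmatrix}$, while the computation of $T_X$ in the free case of Subsection 3.1 (together with \eqref{formula: OVI-T-transform}) identifies that same $(1,2)$-entry with $\partial T_x(z)$; the diagonal gives $T_x(z)=\sum_{n\geq 0}t_n(x)z^n$. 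Hence $T_x(z)=c+z$ and $\partial T_x(z)=c'$. I expect the main obstacle to be the bookkeeping in the inductive step, namely verifying that in $NC(n-1)$ precisely the two extreme partitions survive once the intermediate coefficients vanish, and checking that every cancellation and the division by $\gamma^{\,n-1}$ are legitimate over the non-field ring $\widetilde{\mathbb{C}}$, which is exactly where invertibility of $\gamma$ (equivalently $c\neq 0$) is used.
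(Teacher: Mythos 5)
Your proof is correct, but it takes a genuinely different route from the paper's. The paper first rescales to $a=c^{-1}x$ so that $t_0(a)=1$, and then runs two separate scalar inductions: one showing $t_n(a)=0$ for $n\geq 2$ directly from \eqref{eqn: single-variable - K vs t}, and a second, more laborious one for the infinitesimal coefficients, in which the surviving terms of $\partial t_\pi$ are counted by hand (only $0_{n-1}$ contributes to the $t_0'$-part, while the one-large-block-plus-singletons partitions contribute $\tbinom{n-1}{k}t'_{n-1-k}(a)t_1(a)^k$), leading to the binomial recursion \eqref{eqn:t'-equation} whose inductive solution gives $t_n'(a)=0$; finally it scales back using the homogeneity $t_n(ca)=ct_n(a)$ and $t_n'(ca)=ct_n'(a)$. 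You instead work once and for all in $(\widetilde{\mathcal{A}},\widetilde{\varphi},\widetilde{\mathbb{C}})$ with $X=\mathrm{diag}(x,x)$: from Mingo's cumulant values you get $\widetilde{\kappa}_n(X)=\gamma=\bigl[\begin{smallmatrix} c & c' \\ 0 & c \end{smallmatrix}\bigr]$ for all $n\geq 1$, and a single induction through the paper's own matrix-valued recursion \eqref{eqn: Upp-kappa-t-formula}, using invertibility of $\gamma$ (i.e.\ $c\neq 0$), yields $\widetilde{t}_0(X)=\gamma$, $\widetilde{t}_1(X)=I$, and $\widetilde{t}_n(X)=0$ for $n\geq 2$, which delivers both the $t_n(x)$ and the $t_n'(x)$ values simultaneously via \eqref{formula: matrix-valued of t_s}. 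Your combinatorial core — that once the intermediate coefficients vanish, only $0_{n-1}$ and $1_{n-1}$ survive in $NC(n-1)$, contributing $\gamma$ and $\widetilde{t}_{n-1}(X)\gamma^{n-1}$ respectively — is exactly right, and it is the same skeleton as the paper's $t_n(a)$ induction, but by running it over $\widetilde{\mathbb{C}}$ you make the entire $\partial t_\pi$/binomial-identity computation and the rescaling trick unnecessary. The cost is that you must invoke the triangular cumulant identity $\widetilde{\kappa}_n(X)=\bigl[\begin{smallmatrix} \kappa_n(x) & \kappa_n'(x) \\ 0 & \kappa_n(x)\end{smallmatrix}\bigr]$, but the paper already uses this implicitly (in the proof of Proposition \ref{prop: kappa' vs t'}), so nothing new is being assumed; and your closing identification of $\partial T_x(z)$ with $\sum_{n\geq 0}t_n'(x)z^n$, via the $(1,2)$-entry of $T_X$ computed in Subsection 3.1, actually supplies a justification for the ``which is equivalent to'' clause that the paper asserts without proof.
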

\begin{proof}
We first consider $a=c^{-1}x$, then $t_0(a)=\varphi(a)=r_1(a)=1$. Therefore,  
\begin{eqnarray*}
r_n(a)&=& \sum_{\pi\in NC(n-1)}\Big(\prod_{V\in \pi}t_{|V|}(a)\Big); \\
r'_n(a)&=&\sum_{\pi\in NC(n-1)} \Big(\sum_{V\in \pi}t'_{|V|}(a)\prod_{W\in \pi; W\neq V}t_{|W|}(a)+\prod_{V\in \pi}t_{|V|}(a)\cdot (n-\#(\pi)) \cdot t_0'(a)\Big).
\end{eqnarray*}
We shall show 
$$t_n(a) = \begin{cases}
    1 & \text{ if }n=0 \\
    \frac{1}{c}  & \text{ if }n=1 \\
    0 & \text{ if }n\geq 2.
\end{cases} \text{ and }t'_n(a) = \begin{cases}
    \frac{c'}{c} & \text{ if }n=0 \\
    0 & \text{ if }n\geq 1,
\end{cases}
$$
Then, the final result can be obtained immediately by applying $t_n(x)=ct_n(a)$ and $t_n'(x)=ct_n'(a)$. 

We note that 
$$ r_n(a)= \frac{1}{c^n}r_n(x)=\frac{1}{c^{n-1}}  \text{ for all } n.$$ 
Thus, $t_1(a)=r_2(a)=1/c$. We shall prove $t_n(a)=0$ for all $n\geq 2$ by induction. First of all, $ r_3(a) = t_2(a) + t_1(a)^2 = t_2(a) $ implies that $t_2(a)=r_3(a)-t_1(a)^2=1/c^2-1/c^2=0.$ 

Suppose that $t_{m}(a)=0$ for all $2\leq m\leq k-1$.  We note that
$$
r_{k+1}(a)=t_{k}(a)+\sum_{\substack{\pi\in NC(k); \\ \pi\neq 1_{k},0_k}}\prod_{V\in \pi}t_{|V|}(a)+t_1(a)^{k}.  
$$
Since $t_{m}(a)=0$ for all $2\leq m\leq k-1$, $\prod_{V\in \pi}t_{|V|}(a)=0$ for all $\pi\in NC(k)\setminus\{1_k,0_k\}$.  
Therefore, 
$$
t_k(a)=r_{k+1}(a)-t_1(a)^k=\frac{1}{c^k}-\frac{1}{c^k}=0. 
$$
As a result, we conclude that $t_n(a)=0$ for all $n$ by induction. 

To compute $t_n'(a)$, we first note that
$
r'_n(a)=\frac{c'}{c^n}
$ for all $n$ and also $t_0'(a)=r_1'(a)=\frac{c'}{c}$. Note that $r_2'(a)=t_1'(a)+t_1(a)t_0'(a)$, then we have
$$
t_1'(a)=r_2'(a)-t_1(a)t'_0(a) = \frac{c'}{c^2} - \frac{1}{c}\frac{c'}{c} = 0. 
$$
For $n\geq 2$, we observe that for a given $\pi\in NC(n-1)$, 
\begin{eqnarray*}
\prod_{V\in \pi}t_{|V|}(a)\cdot (n-1) \cdot t_0'(a) = \begin{cases}
    t_1(a)^{n-1}\cdot (n-\#(\pi))\cdot t_0'(a) = \frac{c'}{c^n} & \text{ if } \pi=0_{n-1} \\
    0 & \text{ otherwise}. 
\end{cases}
\end{eqnarray*}
In addition, 
\begin{eqnarray*}
\sum_{\pi\in NC(n-1)}\sum_{V\in \pi}t'_{|V|}(a)\prod_{W\in \pi; W\neq V}t_{|W|}(a) = \sum_{k=0}^{n-2}\tbinom{n-1}{k} t_{n-1-k}'(a)\cdot t_1^k(a) =  \sum_{k=0}^{n-2}\tbinom{n-1}{k} t_{n-1-k}'(a)\cdot \frac{1}{c^k}. 
\end{eqnarray*}
Thus, for all $n\geq 2$, we obtain 
\begin{eqnarray*}
   \sum_{k=0}^{n-2}\tbinom{n-1}{k}t_{n-1-k}'(a)\frac{1}{c^k}+\frac{c'}{c^n} = \frac{c'}{c^n}  
\text { which implies that } 
   \sum_{k=0}^{n-2}\tbinom{n-1}{k}t_{n-1-k}'(a)\frac{1}{c^k} = 0.
\end{eqnarray*}

By substituting $s=n-1-k$ and performing some simplifications, we obtain the following equation: 
\begin{equation}\label{eqn:t'-equation}
\sum_{s=0}^{n-1}\tbinom{n-1}{s}t_s'(a)c^{s+1}=c' \text{ for all }n\geq 2.
\end{equation}
We shall show that $t_n'(a)= 0$ for all $n\geq 2$ by induction. Applying \eqref{eqn:t'-equation} with $n=3$, we have  
$$
\sum_{s=0}^2 \binom{2}{s}t_s'(a)c^{s+1} =c' \text{ implies } t_2'(a) = \frac{1}{c^3}\Big(c'-\binom{2}{0}\frac{c'}{c}\cdot c+\binom{2}{1}\cdot 0\cdot c^2\Big) = 0.
$$
Suppose that $t_{k}'(a)=0$ for all $2\leq k\leq m-1$. 
Applying \eqref{eqn:t'-equation} with $n=m+1$, we have
$$
\sum_{s=0}^m\binom{m}{s}t'_s(a)c^{s+1} = c',
\text{ which implies } 
\binom{m}{0}c'+\sum_{s=1}^{m-1}\binom{m}{s}\cdot 0 \cdot c^{s+1} + t_m'(a)c^{m+1}=c'.
$$
Hence, we obtain $t_m'(a)=0$. By induction, we complete the proof. 
\end{proof}
To study the limit law of product of independent complex Wishart matrices, let $y$ be an infinitesimal free copy of $x$. We deduce the $T$-transform and the infinitesimal $T$-transform of $xy$ by combining Proposition \ref{prop: inf Wishart t-coeff} and  \eqref{eqn: scalar-version t-convolution} as below. 
\begin{thm}
$$T_{xy}(z) = c^2+2cz+z^2 \text{ and }\partial T_{xy}(z)=2c'(c+z).$$
\end{thm}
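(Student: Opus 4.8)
The plan is to reduce everything to the two scalar multiplicative formulas already established, namely the $T$-transform product rule in \eqref{Tconvolution} and its infinitesimal counterpart \eqref{eqn: scalar-version t-convolution}, feeding in the single-variable data computed in Proposition \ref{prop: inf Wishart t-coeff}. Since $y$ is taken to be an infinitesimal free copy of $x$, it carries the identical law, so that Proposition \ref{prop: inf Wishart t-coeff} yields simultaneously
$$
T_x(z)=T_y(z)=c+z \qquad \text{and} \qquad \partial T_x(z)=\partial T_y(z)=c'.
$$
These four quantities are the only inputs the product formulas require.

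First I would compute the non-infinitesimal part. Because we are in the scalar regime $\mathcal{B}=\mathbb{C}$, the inner conjugation appearing in \eqref{Tconvolution} collapses: the factor $T_y(z)\,z\,T_y(z)^{-1}$ simply equals $z$ by commutativity, so the multiplicative formula degenerates to $T_{xy}(z)=T_x(z)\,T_y(z)$. Substituting $T_x(z)=T_y(z)=c+z$ then gives
$$
T_{xy}(z)=(c+z)^2=c^2+2cz+z^2,
$$
which is the first asserted identity.

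Next I would apply the infinitesimal product rule \eqref{eqn: scalar-version t-convolution}, valid precisely because $x$ and $y$ are infinitesimally free, in the form
$$
\partial T_{xy}(z)=\partial T_x(z)\,T_y(z)+T_x(z)\,\partial T_y(z).
$$
Inserting $\partial T_x(z)=\partial T_y(z)=c'$ and $T_x(z)=T_y(z)=c+z$ yields
$$
\partial T_{xy}(z)=c'(c+z)+(c+z)c'=2c'(c+z),
$$
establishing the second identity and completing the proof.

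Honestly, there is no real obstacle here: the content of the theorem is entirely front-loaded into Proposition \ref{prop: inf Wishart t-coeff}, where the $t$- and $t'$-coefficients of the Wishart limit were determined, and into the scalar convolution formula \eqref{eqn: scalar-version t-convolution}. The only point demanding a line of justification is the reduction of \eqref{Tconvolution} to a plain product in the scalar setting, and the observation that an infinitesimal free copy shares the same pair $(T_x,\partial T_x)$; both are immediate from commutativity of $\mathbb{C}$ and from the defining data of the copy, respectively. Thus the argument is a direct substitution rather than a genuine computation.
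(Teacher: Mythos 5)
Your proof is correct and follows essentially the same route as the paper: both read off $T_x(z)=T_y(z)=c+z$ and $\partial T_x(z)=\partial T_y(z)=c'$ from Proposition \ref{prop: inf Wishart t-coeff}, use the scalar collapse of \eqref{Tconvolution} to $T_{xy}=T_xT_y$, and apply the infinitesimal product rule \eqref{eqn: scalar-version t-convolution}. Your explicit remark that commutativity of $\mathbb{C}$ kills the conjugation $T_y(z)\,z\,T_y(z)^{-1}$ is a justification the paper leaves implicit, but it is not a different argument.
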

\begin{proof}
By Proposition \ref{prop: inf Wishart t-coeff}, we first note
that $T_x(z)=T_y(z)=c+z$ and 
$$
\partial T_x(z)=\partial T_y(z)=c'.
$$
Therefore, we have 
$$T_{ab}(z)=T_a(z)T_b(z)=\big(c+z\big)^2= c^2+2cz+z^2.$$ 
In addition, 
\begin{eqnarray*}
    \partial T_{xy}(z)= \partial T_x(z) \cdot T_y(z) + T_x(z) \cdot \partial T_y(z)
    =2 c'(c+z).
\end{eqnarray*}
\end{proof}

\section*{Acknowledgement}

The author would like to express their gratitude to Mihai Popa for the insightful discussions that led to the consideration of non-crossing linked partitions and t-coefficients. The author also extend their thanks to Nicolas Gilliers for the valuable discussions. Furthermore, the authors wish to extend their sincere appreciation to the anonymous reviewers for their insightful comments and constructive suggestions, which have significantly enhanced the quality of this manuscript.
\bibliographystyle{abbrv}

\end{document}